\documentclass[a4paper,12pt,final]{amsart}
\usepackage[utf8]{inputenc}
\usepackage[T1]{fontenc}
\usepackage[UKenglish]{babel}
\usepackage[a4paper,margin=28mm]{geometry}
\usepackage{verbatim}
\usepackage{mathtools}
\allowdisplaybreaks[4]
\usepackage{times}
\usepackage{dsfont,mathrsfs}
\usepackage{amsmath}
\usepackage{amsthm}
\usepackage{amssymb}
\usepackage{amsfonts}
\usepackage{latexsym}
\usepackage{booktabs}
\usepackage{graphicx}
\usepackage{epstopdf}

\usepackage[colorlinks,linkcolor=red,pagebackref]{hyperref}
\usepackage{xcolor}
\usepackage{tcolorbox}
\tcbuselibrary{skins, breakable}

\usepackage{ulem}
\usepackage{tikz}
\usetikzlibrary{positioning, shapes.geometric}

\makeatletter
\@namedef{subjclassname@2020}{\textup{2020} Mathematics Subject Classification}
\makeatother

\newtheorem{theorem}{Theorem}[section]
\newtheorem{lemma}[theorem]{Lemma}
\newtheorem{proposition}[theorem]{Proposition}

\newtheorem{assumption}[theorem]{Assumption}

\theoremstyle{definition}

\newtheorem{remark}[theorem]{Remark}
\numberwithin{equation}{section}
\renewcommand{\labelenumi}{\roman{enumi})}
\renewcommand\theenumi\labelenumi
\renewcommand{\leq}{\leqslant}
\renewcommand{\le}{\leqslant}
\renewcommand{\geq}{\geqslant}
\renewcommand{\ge}{\geqslant}

\newcommand{\Be}{\begin{equation}}
	\newcommand{\Ees}{\end{equation*}}
\newcommand{\Bes}{\begin{equation*}}
	\newcommand{\Ee}{\end{equation}}

\newcommand{\R}{\mathbb{R}}
\newcommand{\E}{\mathbb{E}}
\newcommand{\e}{\varepsilon}

\newcommand{\dif}{\mathrm{d}}

\usepackage{marginnote}
\begin{document}
	
	\title[Smoluchowski-Kramers limit of stable SDE]
	{Wasserstein bounds for invariant measures in the Smoluchowski-Kramers approximation with stable noise}

	\author[X. Jin]{Xinghu Jin}
	\address{Xinghu Jin: School of Mathematics,
		Hefei University of Technology,
		Hefei, Anhui, China;}
	\email{xinghujin@hfut.edu.cn}

	\author[W. Tao]{Wenxuan Tao*}
	\address{Wenxuan Tao (Corresponding Author): School of Mathematics, University of Birmingham, Birmingham, B15 2TT, United Kingdom}
	\email{wxt399@student.bham.ac.uk}

	\keywords{Smoluchowski-Kramers approxiamtion; $\alpha$-stable process; Wasserstein-1 distance; Stein's equation;}
	
	\subjclass[]{60B1;60G52;60G10}

	\begin{abstract}
	We study invariant position distributions in the Smoluchowski–Kramers approximation for Langevin equations driven by rotationally symmetric \(\alpha\)-stable noise, with \(\alpha\in(1,2)\). Under global Lipschitz continuity and dissipativity of the drift, we establish a Wasserstein-1 bound of order \(m^{1/\alpha}\) between the invariant position distribution of the underdamped system and the invariant distribution of its overdamped limit, where $m>0$ is the mass parameter. The proof combines a corrected-position test function with a regularized Stein equation. Cancellation of the transport and friction terms reduces the comparison to a first-moment estimate for the stationary velocity.
	\end{abstract}

	\maketitle
	
	\setcounter{tocdepth}{2}

	\section{Introduction} \label{Introduction}
	
	We consider the following Langevin system driven by a $d$-dimensional rotationally symmetric $\alpha$-stable L\'evy process, with $\alpha\in(1,2)$:
	\begin{align}\label{e:stableHamsys}
		\left\{
		\begin{array}{lll}
			\dif X_t^m &= Y_t^m \dif t, \\
			m\dif Y_t^m &= \left( b(X_t^m)-\gamma Y_t^m \right) \dif t + \sigma \dif Z_t,
		\end{array}
		\right.
	\end{align}
	where the constant $\gamma>0$ and  $\sigma\sigma^T$ is a strictly positive definite matrix, and $(Z_t)_{t\geq 0}$ is a  rotationally symmetric $\alpha$ stable process with L\'evy measure
	\begin{align}\label{levy measure}
		\nu(\dif z) &=
		\frac{\alpha 2^{\alpha-1} \pi^{-d/2}\Gamma((d+\alpha)/2)/\Gamma(1-\alpha/2)}{|z|^{\alpha+d}} \dif z
		=: \frac{d_{\alpha}}{|z|^{\alpha+d}} \dif z.
	\end{align}
	For each deterministic initial state $(x,y)\in\R^d\times\R^d$, we write
	$(X_t^{m,x,y},Y_t^{m,x,y})$ for the solution of \eqref{e:stableHamsys}
	with $(X_0^{m,x,y},Y_0^{m,x,y})=(x,y)$. 
	\begin{assumption}\label{Assumption b}
		The coefficient	$b:\R^d \to \R^d$ 
		is globally Lipschitz and there exists some constant $K>0$ such that for all $x,y\in \R^d$
		\begin{align*}
			|b(x)-b(y)| & \leq K |x-y|.
		\end{align*}
		Additionally, the coefficient $b$ satisfies the following dissipative condition, that is, there exist constants $c_1>0$ and $c_2>0$ such that for all $x,y\in \R^d$
		\begin{align*}
			\langle x-y, b(x)-b(y) \rangle & \leq -c_1 |x-y|^2+c_2.
		\end{align*}
	\end{assumption}

	We shall use the following SDE driven by stable process to approximate the law of $X_t^m$ as $m\to 0$:
	\begin{align}\label{X SDE}
		\dif X_t &= \gamma^{-1} b(X_t) \dif t + \gamma^{-1} \sigma  \dif Z_t,
	\end{align} 
	where the coefficient $b$ and noise process $Z$ admit the same meanings in \eqref{e:stableHamsys}. For the  process $X=(X_t)_{t\geq 0}$ in \eqref{X SDE}, $X_t^x$ denotes the solution starting from $X_0^x=x$; no initial velocity is prescribed for the limiting equation.
	
	Due to all limiting-results rely on  $m\to 0$, we only concern on 
		\begin{align}\label{E:small-mass-range}
			0<m\leq m_0
            := \min\left\{1, \frac{4c_1 \gamma^2}{16K^2+2c_1(1+2K)+c_1^2}
            \right\},
		\end{align}
	where constants $c_1,c_2$ and $K$ are all in Assumption \ref{Assumption b}.

	\subsection{Wasserstein-1 approximation for ergodic measure.}
	Under Assumption \ref{Assumption b}, \cite{Bao2022coupling} gave the exponential ergodicity for the process $(X_t^m,Y_t^m)_{t\geq 0}$ with ergodic measure $\mu_m$, denote $\mu_{m,1}$ the position marginal of the invariant law $\mu_m$. The corresponding limit SDE $(X_t)_{t\geq 0}$ in \eqref{X SDE} can also be proven ergodicity with ergodic measure $\mu$ by \cite[Theorem 1.3]{Liang2020Gradient} (see Proposition \ref{pro ergodic X}). 
	The small-mass comparisons below are considered for $0<m\leq m_0$,
	where $m_0$ is specified in \eqref{E:small-mass-range}.  Our main result is to investigate rate of convergence from ergodic measures $\mu$ to $\mu_{m,1}$ with respect to the Wasserstein-1 distance:
	\begin{align*}
		\mathcal{W}_{1}(\mu_{m,1},\mu)=\sup_{ h\in {\rm Lip}(1) }\{\mu_{m,1}(h)-\mu(h)\},
	\end{align*}
	where ${\rm Lip}(1)=\{h: |h(x)-h(y)|\leq |x-y|, \forall x, y \in \R^d\}$.

	\begin{theorem}\label{thm W1}
		Under Assumption \ref{Assumption b} and  $0<m\leq m_0$, let $\mu$ be the ergodic measure for $(X_t)_{t\geq 0}$ in SDE \eqref{X SDE} and $\mu_{m,1}$ be the position marginal of the invariant measure $\mu_m$  of system $(X_t^m,Y_t^m)_{t\geq 0}$ in \eqref{e:stableHamsys}. Then there exists $C>0$ independent of $m$ such that
		\begin{align*}
			\mathcal{W}_1(\mu_{m,1},\mu)
			&\leq Cm^{1/\alpha}.
		\end{align*}
	\end{theorem}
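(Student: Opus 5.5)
The plan is to use Stein's method with the generator $\mathcal{A}$ of the limiting SDE \eqref{X SDE}, namely
\[
\mathcal{A}f(x)=\gamma^{-1}\langle b(x),\nabla f(x)\rangle+\mathcal{L}^{\sigma}f(x),
\]
where $\mathcal{L}^{\sigma}$ is the generator of $\gamma^{-1}\sigma Z$.

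\textbf{Step 1: the Stein equation.} For $h\in{\rm Lip}(1)$ we regularize: replace $h$ by $h_\epsilon=h*\varphi_\epsilon$ (or use the semigroup $P_\epsilon h$), at a cost of $O(\epsilon)$ in $\mathcal W_1$. We then solve $\mathcal{A}f=h_\epsilon-\mu(h_\epsilon)$ by
\[
f=-\int_0^\infty \bigl(P_th_\epsilon-\mu(h_\epsilon)\bigr)\,\dif t .
\]
From the exponential ergodicity in Proposition \ref{pro ergodic X} (via \cite{Liang2020Gradient}) and gradient estimates of the semigroup, we expect
\[
\|\nabla f\|_\infty\le C,\qquad \|\nabla^2 f\|_\infty\le C\epsilon^{-(2-\alpha)/\alpha}\ \text{(or } C\epsilon^{-1}\text{, or a log loss)},
\]
together with a H\"older bound on $\nabla f$ with exponent $\beta<\alpha-1$. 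These estimates are the standard ingredients of stable Stein's method.

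\textbf{Step 2: the corrected-position test function.} Let $\mathcal{L}^m$ be the generator of \eqref{e:stableHamsys}. We apply it to
\[
F(x,y)=f\Bigl(x+\tfrac{m}{\gamma}y\Bigr).
\]
Along the dynamics, $\tilde X_t:=X^m_t+\frac{m}{\gamma}Y^m_t$ satisfies
\[
\dif\tilde X_t=\gamma^{-1}b(X^m_t)\,\dif t+\gamma^{-1}\sigma\,\dif Z_t ,
\]
because the transport term $Y\,\dif t$ and the friction term $-\gamma Y\,\dif t\cdot\frac{1}{\gamma}$ cancel exactly. Hence
\[
\mathcal L^mF(x,y)=\gamma^{-1}\langle b(x),\nabla f(\tilde x)\rangle+(\mathcal L^\sigma f)(\tilde x),\qquad \tilde x=x+\tfrac m\gamma y .
\]
Since $\mu_m(\mathcal L^mF)=0$ by invariance (after a truncation or moment justification), we get
\[
\mu_m\bigl(\mathcal A f(\tilde x)\bigr)=\gamma^{-1}\mu_m\bigl(\langle b(\tilde x)-b(x),\nabla f(\tilde x)\rangle\bigr),
\]
and the right-hand side is bounded by $K\|\nabla f\|_\infty\,\frac{m}{\gamma^2}\,\mu_m(|y|)$. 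The key advantage is that no second derivative of $f$ appears. Consequently
\[
\mu_{m,1}(h_\epsilon)-\mu(h_\epsilon)=\mu_m\bigl(h_\epsilon(x)-h_\epsilon(\tilde x)\bigr)+\mu_m\bigl(\mathcal Af(\tilde x)\bigr),
\]
and both terms are bounded by $C\,\frac m\gamma\,\mu_m(|y|)$, using only $\|\nabla f\|_\infty\le C$ and $\mathrm{Lip}(h_\epsilon)\le1$. This allows $\epsilon\to0$, so the regularization matters only for justifying that $f\in C^2$ and for applying the generator.

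\textbf{Step 3: the main obstacle.} It remains to prove
\[
\int|y|\,\mu_m(\dif x,\dif y)\le C m^{1/\alpha-1}.
\]
Heuristically, $Y$ is an Ornstein--Uhlenbeck-type velocity with relaxation rate $\gamma/m$ and noise $m^{-1}\sigma\,\dif Z$. Its stationary scale is therefore
\[
m^{-1}\,(m/\gamma)^{1/\alpha}\sim m^{1/\alpha-1},
\]
plus a drift contribution $b(X)/\gamma$ of order one. To make this rigorous, we write the variation of constants formula
\[
Y_t=e^{-\gamma t/m}Y_0+\frac1m\int_0^te^{-\gamma(t-s)/m}b(X_s)\,\dif s+\frac1m\int_0^te^{-\gamma(t-s)/m}\sigma\,\dif Z_s .
\]
Starting from the stationary law, the first moment of the stochastic integral equals
\[
\frac1m\Bigl(\int_0^t e^{-\alpha\gamma s/m}\,\dif s\Bigr)^{1/\alpha}\E|\sigma Z_1|\le Cm^{1/\alpha-1},
\]
by the scaling of stable integrals (valid since $\alpha>1$). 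The drift term is bounded by $\gamma^{-1}\sup_s\E|b(X_s)|$, which is finite under $\mu_m$ uniformly in $m$.

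The delicate part is a uniform-in-$m$ bound on $\mu_m(|x|)$. For this we would use a Lyapunov function
\[
V(x,y)=\bigl(1+|x|^2+\tfrac{2m}{\gamma}\langle x,y\rangle+\tfrac{m^2}{\gamma}\cdot c\,|y|^2\bigr)^{1/2},
\]
i.e.\ $|\tilde x|$-like plus a small velocity part; the condition $m\le m_0$ in \eqref{E:small-mass-range} is what makes this quadratic form positive and dissipative. The $1/2$ power keeps the stable jump terms finite since $\alpha>1$. Alternatively, the simplest route is to apply the dissipativity of Assumption \ref{Assumption b} directly to $\tilde x$ via the equation for $\tilde X$, treating $b(X)-b(\tilde X)$ as a perturbation of size $K\frac m\gamma|Y|$. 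This closes into a coupled linear system for the pair $(\mu_m|x|,\ m\,\mu_m|y|)$, and $m\le m_0$ ensures the system is solvable. Combining, we obtain
\[
\mathcal W_1\le C\,m\cdot m^{1/\alpha-1}=Cm^{1/\alpha}.
\]
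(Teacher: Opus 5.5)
Your architecture matches the paper's. You use the corrected-position test function $f(x+\gamma^{-1}my)$ and the exact cancellation of transport and friction, which leaves $\mathcal{A}f(\omega)+\gamma^{-1}\langle b(x)-b(\omega),\nabla f(\omega)\rangle$. You then use stationarity of $\mu_m$ to reduce to $m\,\mu_m(|y|)$, bound $\mu_m(|y|)\lesssim m^{1/\alpha-1}$ by variation of constants, and get uniform position moments from a Lyapunov function in $(\omega,p)=(x+\gamma^{-1}my,\gamma^{-1}my)$.

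\textbf{The gap is in Step 1.} You smooth the data $h$ and then assert that the Stein solution $f$ is $C^2$ with $\|\nabla^2 f\|_\infty<\infty$ for fixed $\varepsilon$. That is what you need to make the compensated small-jump integral in $\mathcal{A}_m F$ converge and to justify $\mu_m(\mathcal{A}_m F)=0$, but nothing you invoke provides it:
\begin{itemize}
\item The $\mathcal{W}_1$-contraction behind Proposition~\ref{pro ergodic X} gives only the first-order bound $|\nabla P_t h|\le Ce^{-ct}$.
\item Smoothing $h$, or replacing it by $P_\varepsilon h$, does not raise the regularity of $f$ beyond what the term $\gamma^{-1}\langle b,\nabla f\rangle$ allows. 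Under Assumption~\ref{Assumption b}, $b$ is only Lipschitz and of linear growth.
\item Second-order semigroup or Stein-solution estimates for stable SDEs are typically derived for drifts that are at least $C^1$, usually $C^2$, with bounded derivatives. They are not available under the present hypothesis.
\item The H\"older modulus of $\nabla f$ with exponent $<\alpha-1$ that you mention would not make $\int_{|z|\le 1}|f(\omega+Az)-f(\omega)-\langle Az,\nabla f(\omega)\rangle|\,|z|^{-d-\alpha}\dif z$ finite. You need an exponent $>\alpha-1$.
\end{itemize}

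\textbf{The paper's fix} is to mollify the Stein solution rather than the data. With $f_{h,\varepsilon}=f_h*\rho_\varepsilon$, the bound $\|\nabla^2 f_{h,\varepsilon}\|_\infty\le\|\nabla f_h\|_\infty\|\nabla\rho_\varepsilon\|_{L^1}\lesssim\varepsilon^{-1}$ follows directly from the first-order estimate. The price is a commutator term: $\mathcal{A}f_{h,\varepsilon}=h_\varepsilon-\mu(h)+\mathcal{R}_{h,\varepsilon}$, where $\mathcal{R}_{h,\varepsilon}(x)=\gamma^{-1}\int\langle b(x)-b(x-z),\nabla f_h(x-z)\rangle\rho_\varepsilon(z)\dif z$. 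This satisfies $\|\mathcal{R}_{h,\varepsilon}\|_\infty\lesssim\varepsilon$ using only that $b$ is Lipschitz and $\nabla f_h$ is bounded. With this substitution your Steps 2 and 3 go through unchanged, and the extra $O(\varepsilon)$ term vanishes as $\varepsilon\to0$, as you intended.

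Alternatively, you could avoid a global Hessian bound by proving $f\in C^2_{\rm loc}$ through local Schauder theory and then localizing. This works because the pointwise Stein equation already gives $\mathcal{A}_mF=h_\varepsilon(\omega)-\mu(h_\varepsilon)+\gamma^{-1}\langle b(x)-b(\omega),\nabla f(\omega)\rangle$, which has linear growth. That route, however, needs a PDE argument the proposal does not contain.

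\textbf{A minor omission in Step 3.} To absorb $e^{-\gamma t/m}Y_0$ you need $\mu_m(|y|)<\infty$ a priori for each fixed $m$. This follows from your Lyapunov bound, since it controls $m\,\mu_m(|y|)$.
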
 
	
	\begin{remark}
		Establishing ergodicity for system $(X_t^m,Y_t^m)_{t\geq 0}$ in \eqref{e:stableHamsys} for the multiplicative-noise case, with $\sigma=\sigma(x)$, appears to be more challenging. Although a Lyapunov construction similar to that in the next section may yield moment bounds under suitable assumptions on $\sigma$, further arguments are needed to establish exponential ergodicity. For additive noise, Bao and Wang obtained exponential ergodicity using a coupling approach~\cite{Bao2022coupling}. For multiplicative noise, Zhang established the existence of a smooth fundamental solution under a nonlocal H\"ormander-type condition\cite{Zhang2017fundamental}. Applying that result to the present kinetic system, however, requires a careful verification of its assumptions. In particular, the version of the result considered here may require a velocity-dependent coefficient $\sigma=\sigma(x,v)$ with suitable decay as $|v|\to\infty$, which would restrict its applicability to coefficients depending only on position. Moreover, such regularity does not by itself establish exponential ergodicity. Other condition like reachability is still needed.
	\end{remark}

	\subsection{Related works}
		
		The Smoluchowski-Kramers approximation has been extensively studied for stochastic systems driven by Brownian and general L\'evy noises.
		
		{\it Smoluchowski-Kramers approximation with Brownian noise:} The Smoluchowski-Kramers approximation describes the small-mass limit of Langevin-type stochastic differential equations driven by Brownian motion. In the classical setting, the stochastic forcing is continuous and Gaussian, with finite moments of all orders, and the limiting procedure reduces a second-order stochastic dynamics to an effective first-order equation as the mass tends to zero. The foundations of this theory can be traced back to Nelson \cite{Nelson1967dynamical}, while Freidlin \cite{Freidlin2004some} and Chen and Freidlin \cite{Chen2005Smoluchowski} investigated the small-mass limit and its implications for related probabilistic problems. Hottovy et al. \cite{Hottovy2015the} further studied the approximation for stochastic differential equations with state-dependent friction. The framework has also been used in statistical applications; in particular, He, Duan and Cheng \cite{He2019A} developed a parameter estimation method based on the Smoluchowski-Kramers approximation. Beyond the qualitative convergence of the small-mass system to its limiting dynamics, quantitative convergence rates have also been established in the Brownian setting. Van and Dung \cite{Van2020A} obtained Berry-Esseen bounds, providing explicit rates for the distributional approximation, while Son, Le and Duong \cite{Son2024rate} established convergence rates in total variation distance. These results show that the classical Brownian theory provides not only a rigorous foundation for the Smoluchowski-Kramers limit but also a quantitative framework for measuring the approximation error in different probability metrics. Liu, Liu and Xu \cite{Liu2025long} used the Stein method to study the Wasserstein-1 convergence rate for Smoluchowski-Kramers approximation.

		{\it Smoluchowski-Kramers approximation with L\'evy processes:} For general L\'evy processes, existing results often rely on finite-moment assumptions or finite jump activity. For example, Wang, Lv and Duan \cite{Wang2023Homogenization} established small-mass limits for stochastic Hamiltonian systems with multiplicative L\'evy noise under sufficiently high-order moment assumptions on the L\'evy measure, while Zhang \cite{Zhang2008Smoluchowski} considered non-Gaussian L\'evy noise under finite-moment or finite-intensity conditions. Recently, Zhao and Wang \cite{Zhao2025convergence} studied the Smoluchowski-Kramers approximation for Langevin equations driven by isotropic $\alpha$-stable noise with $\alpha\in(1,2)$. They obtained quantitative strong convergence for a suitable noise-scaling regime and established convergence in the Skorokhod topology in the critical case where uniform pathwise convergence generally fails. Related small-mass approximation results for stable-driven stochastic equations have also been obtained in \cite{Zhao2026on}. These works highlight the distinct mathematical features of Smoluchowski-Kramers approximations under stable noise and motivate further analysis based directly on the heavy-tailed structure of the stable L\'{e}vy measure.

		{\it Stein method and quantitative convergence of probability laws:} The Stein method is a powerful probabilistic framework for obtaining quantitative convergence rates between probability laws. Unlike classical weak-convergence arguments, which typically establish convergence without specifying an explicit rate, the Stein method provides a systematic approach to quantifying the discrepancy between a target distribution and its approximation under a suitable probability metric. Originally developed for normal approximation, it has since been extended to a broad range of probability distributions and stochastic systems; see, for example, Barbour and Chen \cite{Barbour2005an} and Chen, Goldstein and Shao \cite{Chen2011normal}. In particular, Stein's method has been successfully applied to stable-law approximations, including symmetric and asymmetric stable distributions and non-integrable stable limits \cite{Xu2019approximation,Chen2021stein,Chen2022non,Jin2020A}. It has also been used to quantify approximation errors for stochastic systems, including Euler-Maruyama schemes for diffusion processes and stable SDEs \cite{Fang2019multivariate,Chen2023Approximation,Chen2025Approximation}, as well as steady-state and invariant-measure approximations for queueing systems and their diffusion limits \cite{Gurvich2014diffusion,Jin2025An}. More broadly, the combination of Stein's method with semigroup estimates, Malliavin calculus, PDE techniques, coupling arguments, and generator expansions has provided an effective framework for quantitative distributional approximation in Markov processes, stochastic differential equations, and related numerical schemes. These developments highlight the versatility of Stein's method, particularly in settings involving non-Gaussian noise, stochastic dynamics, and invariant probability measures.

	\subsection{Methodology}
	Denote $\mathcal{A}_m$ the generator for process $(X_t^m,Y_t^m)_{t\geq 0}$ which is given in Lemma \ref{lemma Am},
	\begin{align}\label{generator Lm}
		\mathcal{A}_m f(x,y)
		&= \left\langle y, \frac{\partial}{\partial x} f(x,y) \right\rangle
		+ m^{-1} \left\langle b(x)-\gamma y, \frac{\partial}{\partial y} f(x,y) \right\rangle
		+(-\Delta)^{\alpha/2}_{ m^{-1}\sigma,2}f(x,y),
	\end{align}
	where $\frac{\partial}{\partial x} f$ and $\frac{\partial}{\partial y} f$ are partial derivatives with respected to $x$ and $y$ respectively, and 
	\begin{align*}
	(-\Delta)^{\alpha/2}_{A,2}f(x,y)
	&= \int_{\R_0^d}\left[f\left(x,y+A z\right) -f(x,y)- \langle A z, \frac{\partial}{\partial y} f(x,y) \rangle 1_{ \{ |Az|\leq 1 \} } \right] \frac{d_{\alpha}}{|z|^{\alpha+d}} \dif z,
	\end{align*}
	while $2$ in $(-\Delta)^{\alpha/2}_{A,2}$ means the second variable and $A$ is a matrix.
	
	It is well known that the generator $\mathcal{A}$ for the limit process $(X_t)_{t\geq 0}$ is 
	\begin{align}\label{generator L}
		\mathcal{A} f(x)&= \langle \gamma^{-1} b(x), \nabla f(x) \rangle +(-\Delta)^{\alpha/2}_{\gamma^{-1}\sigma} f(x),
	\end{align}
	where 
	\begin{align*}
	(-\Delta)^{\alpha/2}_{A} f(x)&= \int_{\mathbb{R}_0^d}\left[ f\left(x+Az\right)-f(x)- \langle Az, \nabla f(x) \rangle 1_{\{ |A z|\leq 1 \}} \right] \frac{d_{\alpha}}{|z|^{\alpha+d}} \dif z.
	\end{align*}
	
	Our approach combines Stein's method with a corrected position
	variable and a mollification argument. The correction cancels the
	singular transport and friction terms in the kinetic generator,
	whereas mollification justifies the generator calculation without
	requiring a bounded Hessian of the original Stein solution.

	For $h\in{\rm Lip}(1)$, We consider the following Stein equation
	\begin{align}\label{stein equ}
	\mathcal{A} f_h(x) &=h(x)-\mu(h),
	\end{align}
	where $\mathcal{A}$ and $\mu$ are generator and ergodic measure for process $(X_t)_{t\geq 0}$ in \eqref{X SDE}, additionally, it follows from Proposition \ref{pro ergodic X} that $\mu(h)$ is well defined for $h\in {\rm Lip}(1)$. Estimate for $\|\nabla f_h\|_\infty$ is shown in Lemma \ref{pro:stein regularies}, however, a bounded gradient alone does not ensure absolute convergence of
	the compensated small-jump integral defining $(-\Delta)^{\alpha/2}_{A,2}f_h$ 
	when $\alpha>1$. We therefore regularize the Stein solution before using it as a test function for the kinetic system.
	
	Let $\rho\in C_c^\infty(\mathbb R^d)$ be nonnegative, supported in
	the unit ball, and satisfy $\int_{\R^d} \rho(z)\dif z=1$. For
	$\varepsilon>0$, define
	\begin{align*}
	\rho_\varepsilon(z)=\varepsilon^{-d}\rho(z/\varepsilon),
	\qquad
	f_{h,\varepsilon}=f_h*\rho_\varepsilon,
	\qquad
	h_\varepsilon=h*\rho_\varepsilon.
	\end{align*}
	
	For $h\in {\rm Lip}(1)$, convolving the Stein's equation \eqref{stein equ} with $\rho_\varepsilon$, we obtain
	\begin{align*}
		\gamma^{-1}(b\cdot\nabla f_h)*\rho_\varepsilon
		+((-\Delta)^{\alpha/2}_{A} f_h)*\rho_\varepsilon
		&=h_\varepsilon-\mu(h).
	\end{align*}
	Since convolution preserves constants, and using translation invariance of $(-\Delta)^{\alpha/2}_{A}$, that is, 
	\begin{align*}
		((-\Delta)^{\alpha/2}_{A}f_h)*\rho_\varepsilon
		&=(-\Delta)^{\alpha/2}_{A} (f_h*\rho_\varepsilon)
		=(-\Delta)^{\alpha/2}_{A} f_{h,\varepsilon},
	\end{align*}
	we obtain
	\begin{align*}
		(-\Delta)^{\alpha/2}_{A} f_{h,\varepsilon}
		&=h_\varepsilon-\mu(h)
		-\gamma^{-1}(b\cdot\nabla f_h)*\rho_\varepsilon.
	\end{align*}	
	
	Due to
	\begin{align*}
		\langle b(x),\nabla f_{h,\varepsilon}(x)\rangle
		&=\left\langle
		b(x),\int_{\R^d} \nabla f_h(x-z)\rho_\varepsilon(z) \dif z
		\right\rangle
		=\int_{\R^d} 
		\langle b(x),\nabla f_h(x-z)\rangle\rho_\varepsilon(z)\dif z,
	\end{align*}
	one knows that
	\begin{align}\label{e:newStein}
	\mathcal{A}f_{h,\varepsilon}(x)
	&=
	h_\varepsilon(x)-\mu(h)+\mathcal{R}_{h,\varepsilon}(x),
	\end{align}
	where
	\begin{align*}
	\mathcal{R}_{h,\varepsilon}(x)
	=
	\gamma^{-1}\int_{\mathbb R^d}
	\left\langle
	b(x)-b(x-z),\nabla f_h(x-z)
	\right\rangle
	\rho_\varepsilon(z)\dif z.
	\end{align*}
	Related regularities for above approximation are given in Proposition \ref{lem:newReg}.
	
	We introduce the following refined test function:
	\begin{align*}
		\Phi_{m,h,\e}(x,y)
		&=f_{h,\e}\left(\omega\right)
		=f_{h,\e}\left(x+\gamma^{-1}my\right)
	\end{align*}
	with
	\begin{align}\label{e:omega}
		\omega &= x+\gamma^{-1}my.
	\end{align}
	
	For each fixed $\varepsilon>0$, the term $(-\Delta)^{\alpha/2}_{A,2} \Phi_{m,h,\e}(x,y)$ is well defined. Indeed, Taylor's formula bounds its compensated
	small-jump integrand by
	$C\|\nabla^2f_{h,\varepsilon}\|_\infty|z|^2$, while the
	large-jump integrand is bounded by
	$C\|\nabla f_{h,\varepsilon}\|_\infty|z|$. 
	
	The corrected position $x+\gamma^{-1}my$ in $\Phi_{m,h,\e}$ makes the transport term and
	the velocity-friction term in $\mathcal{A}_m\Phi_{m,h,\e}$ cancel. The remaining expression has the same drift and noise structure as the overdamped
	generator, evaluated at the corrected position. The discrepancy can
	therefore be estimated through the displacement $\gamma^{-1}my$ and
	the regularity of $f_{h,\e}$, rather than by estimating the velocity term alone.
	
	With this choice, we need to establish the corresponding generator comparison
	\begin{align*}
		\mathcal{A}_m\Phi_{m,h,\e}(x,y)-\mathcal{A}f_{h,\e}(\omega),
	\end{align*}
	which provides the key estimate for deriving the Wasserstein-1 convergence rate. This approach differs from the one employed by Liu, Liu and Xu \cite{Liu2025long}, and offers an alternative way to exploit the structure of the Smoluchowski--Kramers approximation. In fact, the same generator-comparison argument can also be adapted to obtain the Wasserstein-1 convergence rate considered in \cite{Liu2025long}.
	
	For $\alpha$-stable noise with $\alpha\in(1,2)$, the refined test function $\Phi_{m,h,\e}$ is particularly effective in the case of a constant friction coefficient $\gamma>0$. Extending this approach to state-dependent friction coefficients $\gamma=\gamma(x)$, which would require a corresponding modification of the test function and a more delicate generator analysis, is an interesting direction for our future research.
	
	After getting the generator comparison in Proposition \ref{lem:generator comparison}, it reminders to get the moment estimates, ergodicity arguments, Stein equations
	and regularity estimates which are in Sections~\ref{org:generators}-\ref{org:stein} respectively. The proof of the main theorem is given in Section~\ref{sec proof}.
	
	\subsection{Notation} In expressions involving $\E_{\mu_m}$, this abbreviation
	refers instead to a stationary initial pair with law $\mu_m$. For a random variable $W$, denote $\mathcal{L}(W)$ the law of $W$. $W_1 \overset{d}{=} W_2$ means $W_1$ and $W_2$ admit the same law.

	The constants $c$ and $C$ may take different values in different places; unless otherwise specified, they will always be independent of $m$. $A\lesssim B$ means that $A\leq cB$ for some constant $c>0$ independent of $m$.

    For function $f:\R^d \to \R$, the uniform norm reads $\| f \|_{\infty}=\sup_{x\in \R^d} |f(x)|$, while the $L^1$ norm is given by $\| f \|_{L^1}=\int_{\R^d} |f(x)| \dif x$.


	\section{Proof of Main result}\label{sec proof} 
	
	Regularities for modified Stein's equation \eqref{e:newStein} by convolution are below which are proven in Section \ref{org:stein}.
	\begin{proposition}\label{lem:newReg}
	Under Assumption~\ref{Assumption b} and for $h\in {\rm Lip}(1)$ and $\e\in (0,1)$, the following estimates hold with a constant independent of $h$ and $\e$
	\begin{align*}
		\| \nabla f_{h,\e} \|_{\infty} \leq C,
		\quad 
		\| \nabla^2 f_{h,\e} \|_{\infty} \leq C\e^{-1},
		\quad 
		\| h-h_{\e}\|_{\infty} \leq C\e,
		\quad 
		\| \mathcal{R}_{h,\varepsilon} \|_{\infty} \leq C\e.
		\end{align*}
		Additionally,  $\mathcal{A}_m\Phi_{m,h,\e}(x,y)$ is well defined and, for every $t\ge 0$,
		\begin{align}\label{e:EAmPhi}
		\E_{\mu_m} \mathcal{A}_m\Phi_{m,h,\e}(X_t^m,Y_t^m) &=0.
		\end{align} 
	\end{proposition}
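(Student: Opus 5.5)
We prove the four sup-norm bounds first. Then we show that $\mathcal{A}_m\Phi_{m,h,\e}$ is well defined and satisfies a stationary Dynkin identity.

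\emph{The four bounds.} We take as given the gradient bound for the Stein solution in Lemma~\ref{pro:stein regularies}, namely $\|\nabla f_h\|_\infty\le C$ uniformly in $h\in{\rm Lip}(1)$. Since $\nabla f_{h,\e}=(\nabla f_h)*\rho_\e$ and $\|\rho_\e\|_{L^1}=1$, we get $\|\nabla f_{h,\e}\|_\infty\le\|\nabla f_h\|_\infty\le C$. For the Hessian, we put one derivative on the mollifier:
\[
\nabla^2 f_{h,\e}=(\nabla f_h)*\nabla\rho_\e,\qquad \|\nabla\rho_\e\|_{L^1}=\e^{-1}\|\nabla\rho\|_{L^1},
\]
which gives $\|\nabla^2 f_{h,\e}\|_\infty\le C\e^{-1}$. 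Next, since $h$ is $1$-Lipschitz and $\rho$ is supported in the unit ball,
\[
|h(x)-h_\e(x)|\le\int_{\R^d}|h(x)-h(x-z)|\rho_\e(z)\dif z\le\int_{\R^d}|z|\rho_\e(z)\dif z\le\e .
\]
Finally, Assumption~\ref{Assumption b} gives $|b(x)-b(x-z)|\le K|z|$, so
\[
|\mathcal{R}_{h,\e}(x)|\le\gamma^{-1}K\|\nabla f_h\|_\infty\int_{\R^d}|z|\rho_\e(z)\dif z\le C\e .
\]
All these constants depend only on $K$, $\gamma$, $\rho$ and the constant of Lemma~\ref{pro:stein regularies}, not on $h$ or $\e$.

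\emph{Well-definedness.} Write $\Phi=\Phi_{m,h,\e}$. Then $\Phi\in C^2$ with $\partial_x\Phi=\nabla f_{h,\e}(\omega)$ and $\partial_y\Phi=\gamma^{-1}m\nabla f_{h,\e}(\omega)$. The first two terms of \eqref{generator Lm} are therefore bounded by $C(1+|x|+|y|)$, using the linear growth of $b$. For the jump term we split the integral at $|m^{-1}\sigma z|\le1$.
\begin{itemize}
\item Small jumps: a second-order Taylor expansion bounds the integrand by $C\e^{-1}m^2|m^{-1}\sigma z|^2$, which is integrable against $|z|^{-d-\alpha}$ near $0$ because $\alpha<2$.
\item Large jumps: the integrand is bounded by $\|\nabla f_{h,\e}\|_\infty\gamma^{-1}m|m^{-1}\sigma z|$, which is integrable at infinity because $\alpha>1$.
\end{itemize}
Hence $|\mathcal{A}_m\Phi(x,y)|\le C_{m,\e}(1+|x|+|y|)$.

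\emph{The identity \eqref{e:EAmPhi}.} By It\^o's formula for the jump SDE \eqref{e:stableHamsys}, which is legitimate because $\Phi\in C^2$ with bounded first and second derivatives (Lemma~\ref{lemma Am} identifies the generator), we have
\[
\Phi(X_t^m,Y_t^m)-\Phi(X_0^m,Y_0^m)-\int_0^t\mathcal{A}_m\Phi(X_s^m,Y_s^m)\dif s=M_t .
\]
Here $M_t$ is a local martingale with two parts.
\begin{itemize}
\item The compensated small-jump part has bounded quadratic variation density, since its integrand is at most $\|\nabla\Phi\|_\infty|m^{-1}\sigma z|$ and $\int_{|m^{-1}\sigma z|\le1}|z|^2\nu(\dif z)<\infty$. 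It is therefore a true $L^2$ martingale.
\item The large-jump part is a compensated Poisson integral with integrand bounded by $C|z|$. It is a true martingale because $\int_{|m^{-1}\sigma z|>1}|z|\,\nu(\dif z)<\infty$ when $\alpha>1$.
\end{itemize}
Take expectations under the stationary initial law $\mu_m$. Then $\Phi(X_t^m,Y_t^m)\overset{d}{=}\Phi(X_0^m,Y_0^m)$, and both are integrable because $\Phi$ grows linearly and $\mu_m$ has a finite first moment by the moment estimates of Section~\ref{org:generators}. The same bound $|\mathcal{A}_m\Phi|\le C(1+|x|+|y|)$ makes $\int_0^t\E_{\mu_m}|\mathcal{A}_m\Phi|\dif s$ finite, so Fubini applies. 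Stationarity then gives $t\,\E_{\mu_m}\mathcal{A}_m\Phi(X_0^m,Y_0^m)=0$, which is exactly \eqref{e:EAmPhi} since the time-$t$ law equals $\mu_m$.

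The main obstacle is the integrability bookkeeping in this last step. It needs $\int|x|+|y|\,\dif\mu_m<\infty$ from the Lyapunov and moment results, together with true-martingale properties for heavy-tailed jumps. We handle the latter by using $\alpha>1$ to control the large jumps in $L^1$ rather than in $L^2$. If a direct martingale argument is awkward, the fallback is to localize with stopping times $\tau_n$ and pass $n\to\infty$ by dominated convergence, using the same linear-growth bound.
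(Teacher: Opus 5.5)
Your proposal is correct and follows the paper's proof essentially step by step. You obtain the four bounds by mollification (putting the derivative on $\rho_\e$ for the Hessian, and using $\int|z|\rho_\e(z)\,\dif z\le\e$ for the last two), prove well-definedness by splitting the jump integral, and combine Dynkin's formula with stationarity to get $t\,\mu_m(\mathcal{A}_m\Phi_{m,h,\e})=0$. The only difference is that you spell out the true-martingale and first-moment integrability checks that the paper compresses into ``by Dynkin's formula''; note that the first moments you need come from Lemma~\ref{lem:moment estimates} in Section~\ref{org:moments}, not Section~\ref{org:generators}.
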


	The following generators comparison plays a crucial role in proving the weak convergence rate which is proven in Section \ref{org:generators}.
	\begin{proposition}\label{lem:generator comparison}
		Under Assumption \ref{Assumption b}, let $f\in \mathcal{C}^2(\R^d)$ have bounded gradient and Hessian, and define $\Phi_m(x,y)=f(x+\gamma^{-1}my)$, one knows that 
		\begin{align*}
		\mathcal{A}_m \Phi_{m}(x,y) &= \mathcal{A} f(\omega)+\gamma^{-1} \langle b(x)-b(\omega),\nabla f(\omega) \rangle.
		\end{align*}
		Additionally, there exists $C>0$ depending only on $\gamma$ and Lipschitz constant of $b$ such that
		\begin{align*}
		|\mathcal{A}_m \Phi_m(x,y)-\mathcal{A}f(\omega)| 
		&\leq Cm\| \nabla f \|_{\infty} |y|,
		\end{align*}
		where $\omega$ is defined in \eqref{e:omega}.
	\end{proposition}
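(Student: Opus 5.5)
The plan is a direct computation of $\mathcal{A}_m\Phi_m$ from \eqref{generator Lm}, term by term, using the chain rule for $\Phi_m(x,y)=f(\omega)$ with $\omega=x+\gamma^{-1}my$. First, $\frac{\partial}{\partial x}\Phi_m(x,y)=\nabla f(\omega)$ and $\frac{\partial}{\partial y}\Phi_m(x,y)=\gamma^{-1}m\nabla f(\omega)$. The transport term is therefore $\langle y,\nabla f(\omega)\rangle$. The drift part of the velocity term is
\begin{align*}
m^{-1}\langle b(x)-\gamma y,\gamma^{-1}m\nabla f(\omega)\rangle=\gamma^{-1}\langle b(x),\nabla f(\omega)\rangle-\langle y,\nabla f(\omega)\rangle.
\end{align*}
The $\pm\langle y,\nabla f(\omega)\rangle$ terms cancel, which leaves $\gamma^{-1}\langle b(x),\nabla f(\omega)\rangle$.

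Next comes the jump part. In $(-\Delta)^{\alpha/2}_{m^{-1}\sigma,2}\Phi_m$ a velocity jump $y\mapsto y+m^{-1}\sigma z$ moves $\omega$ to $\omega+\gamma^{-1}m\cdot m^{-1}\sigma z=\omega+\gamma^{-1}\sigma z$. The compensator is
\begin{align*}
\langle m^{-1}\sigma z,\gamma^{-1}m\nabla f(\omega)\rangle=\langle\gamma^{-1}\sigma z,\nabla f(\omega)\rangle.
\end{align*}
The indicator is $1_{\{|m^{-1}\sigma z|\le 1\}}$, whereas the overdamped operator uses $1_{\{|\gamma^{-1}\sigma z|\le1\}}$. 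This indicator mismatch is the one step needing care. The difference of the two operators is
\begin{align*}
\int \langle\gamma^{-1}\sigma z,\nabla f(\omega)\rangle\big(1_{\{|\gamma^{-1}\sigma z|\le 1\}}-1_{\{|m^{-1}\sigma z|\le1\}}\big)\nu(\dif z).
\end{align*}
The integrand is supported on an annulus-type region bounded away from $0$ and $\infty$ (whose extent depends on $m,\gamma$), and there it is bounded by $C|z|$. It is therefore absolutely integrable, and the integral vanishes by the symmetry $z\mapsto -z$: both $\nu$ and the indicator sets are symmetric, and the integrand is odd. Absolute convergence of both operators themselves follows from the bounded Hessian on small jumps and the bounded gradient with $\alpha>1$ on large jumps. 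Hence $(-\Delta)^{\alpha/2}_{m^{-1}\sigma,2}\Phi_m(x,y)=(-\Delta)^{\alpha/2}_{\gamma^{-1}\sigma}f(\omega)$.

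Collecting the three pieces gives
\begin{align*}
\mathcal{A}_m\Phi_m(x,y)=\gamma^{-1}\langle b(x),\nabla f(\omega)\rangle+(-\Delta)^{\alpha/2}_{\gamma^{-1}\sigma}f(\omega).
\end{align*}
Adding and subtracting $\gamma^{-1}\langle b(\omega),\nabla f(\omega)\rangle$ and comparing with \eqref{generator L} yields the identity. For the bound we use Assumption~\ref{Assumption b}:
\begin{align*}
|b(x)-b(\omega)|\le K|x-\omega|=K\gamma^{-1}m|y|,
\end{align*}
so the error is at most $K\gamma^{-2}m\|\nabla f\|_\infty|y|$. Thus $C=K\gamma^{-2}$.
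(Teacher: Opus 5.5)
Your proposal is correct and follows the paper's proof essentially step by step. Both use the chain rule, which makes the transport and friction terms cancel; the shift $\omega\mapsto\omega+\gamma^{-1}\sigma z$ in the jump term, with the truncation indicator swapped using the symmetry of $\nu$; and the Lipschitz bound with $|x-\omega|=\gamma^{-1}m|y|$. You add two small things the paper does not write out: a check that the indicator difference lives on a region bounded away from $0$ and $\infty$, so the odd integrand is absolutely integrable, and the explicit constant $C=K\gamma^{-2}$.
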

	
	We can obtain the following moment estimates which are proven in Section \ref{org:moments}.
	\begin{lemma}\label{lem:moment estimates}
		Under Assumption~\ref{Assumption b}, and $m_0$ in \eqref{E:small-mass-range}, such that, for every $\beta\in(0,\alpha)$,  $0<m\leq m_0$ and $t\geq0$, there exists some $C>0$ independent of $m$ such that
		\begin{align}
			\E_{\mu_m}|X_t^{m}|^\beta
			&\leq C, \label{Xm moment estimate} \\
			\E_{\mu_m}|Y_t^{m}|^\beta
			&\leq Cm^{\beta(1/\alpha-1)}. \label{Ym moment estimate}
		\end{align} 
	\end{lemma}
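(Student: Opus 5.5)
Since $\mu_m$ is invariant, the law of $(X_t^m,Y_t^m)$ under $\E_{\mu_m}$ is $\mu_m$ for every $t$. It therefore suffices to prove both bounds for a $\mu_m$-distributed pair. The natural tool is a Lyapunov function adapted to the corrected position $\omega=x+\gamma^{-1}my$, combined with a time-scale separation for the velocity.

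\emph{Step 1 (velocity).} I would first write the velocity equation in Duhamel form:
\begin{align*}
Y_t^m=e^{-\gamma t/m}Y_0^m+m^{-1}\int_0^t e^{-\gamma(t-s)/m}b(X_s^m)\,\dif s+m^{-1}\int_0^t e^{-\gamma(t-s)/m}\sigma\,\dif Z_s .
\end{align*}
The stochastic convolution is a symmetric $\alpha$-stable vector. Its scale parameter is
\begin{align*}
m^{-1}\Big(\int_0^\infty e^{-\alpha\gamma s/m}\dif s\Big)^{1/\alpha}\asymp m^{1/\alpha-1},
\end{align*}
so its $\beta$-moment is $\lesssim m^{\beta(1/\alpha-1)}$ for $\beta<\alpha$. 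The drift term is bounded by $\gamma^{-1}\sup_s|b(X_s^m)|\lesssim 1+\sup_s|X_s^m|$ on the relevant window. After letting $t\to\infty$ under stationarity, its $\beta$-moment is therefore controlled by $C(1+\E_{\mu_m}|X|^\beta)$, which is $O(1)\ll m^{\beta(1/\alpha-1)}$ because $1/\alpha-1<0$. To avoid a supremum, I would instead use Jensen and Fubini on the weighted average with kernel $\gamma m^{-1}e^{-\gamma(t-s)/m}$ (for $\beta\ge1$; subadditivity for $\beta<1$). This gives $\E|\cdot|^\beta\lesssim \E_{\mu_m}|b(X_0^m)|^\beta$ by stationarity. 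Hence \eqref{Ym moment estimate} follows from \eqref{Xm moment estimate}.

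\emph{Step 2 (position).} The position bound is the main obstacle, since $|\cdot|^2$ moments are infinite and one cannot square. I would apply the generator $\mathcal{A}_m$ to a Lyapunov function $V(x,y)=(1+|x+\gamma^{-1}my|^2+\kappa m^2|y|^2)^{\beta/2}$, possibly with a cross term as in standard kinetic Lyapunov constructions. The constraint on $m_0$ in \eqref{E:small-mass-range} is presumably exactly what makes the quadratic form inside negative definite. In the corrected variable the transport and friction terms cancel, as in Proposition \ref{lem:generator comparison}. The drift part is then
\begin{align*}
\gamma^{-1}\langle\omega,b(x)\rangle\le -c|\omega|^2+C+Cm|y||\omega| .
\end{align*}
This uses dissipativity with $y=0$ and $|b(x)-b(\omega)|\le K\gamma^{-1}m|y|$. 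The $\kappa m^2|y|^2$ part gives $-2\kappa\gamma m|y|^2$ plus cross terms, which absorb $Cm|y||\omega|$ for small $m$. The jump part acts only in $y$, with jumps of size $m^{-1}\sigma z$. It shifts $\omega$ by $\gamma^{-1}\sigma z$, which is $O(1)$, and shifts $m y$ by $\sigma z$. For a function with $\beta<\alpha$ growth, the standard estimate (second-order Taylor for $|z|\le1$, sublinear growth $|V(\cdot+\sigma z)-V|\lesssim |z|^\beta$ for $|z|>1$) bounds it by $C(1+V^{(\beta-2)/\beta})\le C$. This yields $\mathcal{A}_mV\le -cV+C$ with $c,C$ independent of $m$.

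\emph{Step 3 (conclusion).} Integrating against $\mu_m$ gives $\mu_m(V)\le C/c$. This step needs a truncation, e.g.\ applying Itô/Dynkin to $V\wedge N$ or to the process stopped at exit times, and then Fatou's lemma. The bound gives $\E|\omega|^\beta\le C$, and then
\begin{align*}
\E|X|^\beta\lesssim \E|\omega|^\beta+m^\beta\E|Y|^\beta\lesssim 1+m^{\beta/\alpha}\le C ,
\end{align*}
using Step 1 and $\E|Y|^\beta\lesssim m^{\beta(1/\alpha-1)}$. This is consistent because Step 1 only needs a moment of $b(X)$; alternatively one can close the argument directly from $\mu_m(V)$. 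The delicate point is verifying that the jump term is uniformly bounded in $m$ despite the $m^{-1}$ scaling. I expect this to hold precisely because $V$ depends on $y$ only through $my$.
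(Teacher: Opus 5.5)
Your plan is essentially the paper's proof. The paper's Lyapunov function $V_m=(1+|x+\gamma^{-1}my|^2+|\gamma^{-1}my|^2)^{\beta/2}$ is your $V$ with $\kappa=\gamma^{-2}$. The paper rewrites the dynamics in $(\omega,p)=(x+\gamma^{-1}my,\gamma^{-1}my)$, where the noise coefficient is $\gamma^{-1}\sigma$ in both components and free of $m$; with this $\kappa$ the drift computation gives exactly the threshold $m_0$ of \eqref{E:small-mass-range}. The velocity bound is obtained as you propose, from variation of constants, the stable moment of the stochastic convolution, and Jensen's inequality for the drift convolution. Two places in your sketch need repair.

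\emph{The jump term.} In $\mathcal{A}_m$ the compensator is $1_{\{|m^{-1}\sigma z|\le1\}}=1_{\{|\sigma z|\le m\}}$. After rescaling, the jumps of $(\omega,my)$ are $m$-independent, but they are compensated only on the shrinking set $\{|\sigma z|\le m\}$. Your ``second-order Taylor for $|z|\le1$'' silently moves the truncation to $\{|z|\le1\}$. That is legitimate only because $\nu$ is symmetric, so that $\int\langle w(z),\nabla V\rangle\bigl(1_{\{|z|\le1\}}-1_{\{|\sigma z|\le m\}}\bigr)\nu(\dif z)=0$. For a non-symmetric measure, this first-order term over $m\lesssim|z|\le1$ would in general be of order $m^{1-\alpha}|\nabla V|$, and uniformity in $m$ would fail. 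So dependence on $y$ only through $my$ is only half the reason the jump term is $m$-uniform; symmetry of $\nu$ is the other half. It must be invoked explicitly, as the paper does in \eqref{E:JumpSymmetry} and in the proof of Proposition~\ref{lem:generator comparison}.

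Moreover, for $\beta\in(1,\alpha)$ your large-jump bound $|V(u+w)-V(u)|\lesssim|w|^\beta$ is false uniformly in $u$, since the difference grows like $|u|^{\beta-1}|w|$. There are two fixes. Either use the symmetrized second difference and \eqref{E:SecondDiff_W}, which gives $|J_mW|\le C$. Or keep the first difference, get $C(1+V^{(\beta-1)/\beta})$ (finite since $\alpha>1$), and absorb it into $-cV$.

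\emph{Closing the position bound.} Your first route in Step 3 is circular. Step 1 controls $\E_{\mu_m}|Y|^\beta$ through $\E_{\mu_m}|b(X)|^\beta\lesssim 1+\E_{\mu_m}|X|^\beta$. Inserting this into $\E|X|^\beta\lesssim\E|\omega|^\beta+m^\beta\E|Y|^\beta$ leaves a term $Cm^\beta\E|X|^\beta$, which can only be absorbed under an extra smallness condition on $m$ (not guaranteed on all of $(0,m_0]$) and a priori finiteness. Use your alternative, which is the paper's route. Since $|x|^2\le 2|\omega|^2+2\gamma^{-2}m^2|y|^2$ and $\kappa>0$, one has $|x|^\beta\lesssim V$. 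Hence $\mu_m(V)\le C$ gives \eqref{Xm moment estimate} directly, and then Step 1 gives \eqref{Ym moment estimate}.

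For $\beta<1$ in Step 1, subadditivity does not apply to the time integral. Use instead $\E|B_t^m|^\beta\le(\E|B_t^m|)^\beta$ together with the case $\beta=1<\alpha$.

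On the positive side, your truncation/Fatou step for $\mu_m(V)\le C/c$ is more careful than the paper. The paper implicitly uses $\mu_m(V_m)<\infty$ when passing from \eqref{E:Vm-time-bound} to $\mu_m(V_m)\le C$.
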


	\begin{proof}[Proof of Theorem \ref{thm W1}.]
    Since $\mu_{m,1}$ is the marginal distribution (with respect to the position variable, $X_t^m$) of the invariant measure of the process $(X_t^m,Y_t^m)_{t\geq 0}$ whose ergodic measure is denoted by $\mu_m$, then for any Borel measure set $A\in \mathcal{B}(\R^d)$, one knows that 
		\begin{align*}
			\mu_{m,1}(A) &= \mu_m(A \times \R^d),
		\end{align*}
		which imples that for any
		$f:\R^d \to \R$ satisfing $\mu_{m,1}(|f|)<\infty$, we know
		\begin{align}\label{marginal equal}
			\mu_{m,1}(f) &= \int_{\R^d} f(x) \mu_{m,1}(\dif x)
			= \int_{\R^d \times \R^d} f(x) \mu_m(\dif x, \dif y)
			=\mu_m(f).
		\end{align}

		We aim to get the upper bounded for $\mathcal{W}_1(\mu_{m,1},\mu)$. For the process $(X_t^m,Y_t^m)_{t\geq 0}$, define the corrected position process by
		\begin{align*}
			\Omega_t^m &=X_t^m+\gamma^{-1}mY_t^m.
		\end{align*}
		It follows from \eqref{marginal equal} that
		\begin{align}\label{e:W1}
		\mathcal{W}_1(\mu_{m,1},\mu)
		&= \sup_{h\in {\rm Lip}(1)} \{ \mu_{m,1}(h)-\mu(h) \}
		= \sup_{h\in {\rm Lip}(1)} 
		\{ 
		\E_{\mu_{m,1}}[h(X_t^m)- \mu(h)] 
		\} \nonumber \\
		&= \sup_{h\in {\rm Lip}(1)} 
		\{ 
		\E_{\mu_{m}}[h(X_t^m)- \mu(h)] 
		\}  \nonumber \\
		&\leq \sup_{h\in {\rm Lip}(1)} 
		\{ 
		\E_{\mu_{m}}[h(X_t^m)-h_{\e}(\Omega_t^m)] \}
		+ \sup_{h\in {\rm Lip}(1)} \{ \E_{\mu_{m}} [	h_{\e}(\Omega_t^m)-\mu(h)]
		\}
		=:{\rm I} + {\rm II}.
		\end{align}
		We need to estimate ${\rm I}$ and ${\rm II}$ sperately. 
		
		For ${\rm I}$, by $\| h-h_{\e}\|_{\infty} \lesssim \e$ in Proposition \ref{lem:newReg} and $h\in {\rm Lip}(1)$, one knows that
		\begin{align}\label{e:I}
		{\rm I} &= \sup_{h\in {\rm Lip}(1)} 
		\{ \E_{\mu_{m}}[h(X_t^m)-h_{\e}(\Omega_t^m)] \} \nonumber  \\
		&=\sup_{h\in {\rm Lip}(1)} 
		\{ \E_{\mu_{m}}[h(X_t^m)-h(\Omega_t^m)+h(\Omega_t^m)-h_{\e}(\Omega_t^m)] \}  \nonumber \\
		&\lesssim \E_{\mu_{m}}[|X_t^m-\Omega_t^m|+\e] 
		\lesssim 
		m\E_{\mu_{m}}|Y_t^m|+\e
		\lesssim m^{1/\alpha} + \e,
		\end{align}
		where the last inequality holds from \eqref{Ym moment estimate} with $\beta=1$.
		
		For {\rm II}, it follows from \eqref{e:newStein} and Proposition \ref{lem:generator comparison} with $\Phi_m=\Phi_{m,h,\e}$ and $f=f_{h,\e}$ that
		\begin{align*}
		\mathcal{A}_m \Phi_{m,h,\e}(x,y) &= \mathcal{A} f_{h,\e}(\omega)+\gamma^{-1} \langle b(x)-b(\omega),\nabla f_{h,\e}(\omega) \rangle \\
		&=h_\varepsilon(\omega)-\mu(h)+\mathcal{R}_{h,\varepsilon}(\omega)+\gamma^{-1} \langle b(x)-b(\omega),\nabla f_{h,\e}(\omega) \rangle.
		\end{align*}
		Combining this with \eqref{e:EAmPhi} in	Proposition \ref{lem:newReg}, one knows that
		\begin{align*}
		0&=\mathbb{E}_{\mu_m}\mathcal{A}_m \Phi_{m,h,\e}(X_t^m,Y_t^m) \\ 
		&=\mathbb{E}_{\mu_m} [h_\varepsilon(\Omega_t^m)-\mu(h)]
		+\mathbb{E}_{\mu_m} \mathcal{R}_{h,\varepsilon}(\Omega_t^m)+\gamma^{-1} \mathbb{E}_{\mu_m} [\langle b(X_t^m)-b(\Omega_t^m),\nabla f_{h,\e}(\Omega_t^m) \rangle].
		\end{align*}	
		It implies that
		\begin{align}\label{e:II}
		{\rm II} &= \sup_{h\in {\rm Lip}(1)} 
		\{ 
		\E_{\mu_{m}}[h_{\e}(\Omega_t^m)-\mu(h)]
		\} \nonumber \\
		&=\sup_{h\in {\rm Lip}(1)} \{-\mathbb{E}_{\mu_m} \mathcal{R}_{h,\varepsilon}(\Omega_t^m)-\gamma^{-1} \mathbb{E}_{\mu_m} [\langle b(X_t^m)-b(\Omega_t^m),\nabla f_{h,\e}(\Omega_t^m) \rangle] \} \nonumber \\
		&\lesssim \e+ \| \nabla f_{\e,h} \|_{\infty} \mathbb{E}_{\mu_m}|X_t^m-\Omega_t^m|
		\lesssim \e + m^{1/\alpha},
		\end{align}	
		where the last inequality holds from Proposition \ref{lem:newReg} and Lemma  \ref{lem:moment estimates}.
		
		It follows from \eqref{e:W1}, \eqref{e:I} and \eqref{e:II} that taking $\e=m^{1/\alpha}$, 
		\begin{align*}
		\mathcal{W}_1(\mu_{m,1},\mu)
		&\lesssim \e+m^{1/\alpha}
		\lesssim m^{1/\alpha}.
		\end{align*}
		The proof of Theorem~\ref{thm W1} is complete. 
	\end{proof}
	
	\section{Generator comparisons}\label{org:generators}
	\subsection{The generator of the Langevin system}
	Recall that the generator $\mathcal{A}_m$ for process $(X_t^m,Y_t^m)_{t\geq 0}$, which is defined as for $f: \R^d \times \R^d \to \R$ belonging to the domain of $\mathcal{A}_m$, denoted by  $\mathcal{D}(\mathcal{A}_m)$:
	\begin{align*}
		\mathcal{A}_m f(x,y) &=\lim_{t\to 0}\frac{1}{t}(\E f(X_t^{m,x,y},Y_t^{m,x,y})-f(x,y)).
	\end{align*}
	\begin{lemma}\label{lemma Am}
		The expression for $\mathcal{A}_m$ is given in \eqref{generator Lm}. 
	\end{lemma}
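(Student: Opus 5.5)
The plan is to apply Itô's formula for semimartingales with jumps to $f(X_t^{m,x,y},Y_t^{m,x,y})$ for $f$ in a convenient core, say $f\in C^2$ with bounded first and second derivatives (or $C_c^\infty(\R^{2d})$). The generator is then read off from the drift part. The first step is to rewrite \eqref{e:stableHamsys} in canonical form:
\begin{align*}
\dif X_t^m = Y_t^m\dif t,\qquad \dif Y_t^m = m^{-1}(b(X_t^m)-\gamma Y_t^m)\dif t + m^{-1}\sigma\dif Z_t .
\end{align*}
The velocity is therefore driven by the Lévy process $m^{-1}\sigma Z$. The position equation has no jumps, so $X^m$ is continuous with finite variation, and all jumps occur in the $y$-coordinate with sizes $\Delta Y_s=m^{-1}\sigma\Delta Z_s$.

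Next, I use the Lévy–Itô decomposition of $Z$. Since $\nu$ is symmetric, the compensator term $\langle Az,\cdot\rangle$ on the symmetric set $\{|Az|\le1\}$ integrates to zero on any annulus. Hence the truncation can be taken as $1_{\{|m^{-1}\sigma z|\le 1\}}$ without introducing an extra drift. Writing $A=m^{-1}\sigma$, $\tilde N$ for the compensated Poisson measure and $N$ for the Poisson measure, this gives
\begin{align*}
Z_t=\int_0^t\!\!\int_{|Az|\le1} z\,\tilde N(\dif s,\dif z)+\int_0^t\!\!\int_{|Az|>1} z\, N(\dif s,\dif z).
\end{align*}
Itô's formula then yields
\begin{align*}
f(X_t^m,Y_t^m)-f(x,y)&=\int_0^t\Big[\langle Y_s^m,\partial_xf\rangle+m^{-1}\langle b(X_s^m)-\gamma Y_s^m,\partial_yf\rangle\Big]\dif s\\
&\quad+\int_0^t (-\Delta)^{\alpha/2}_{A,2}f(X_s^m,Y_s^m)\dif s+M_t ,
\end{align*}
where $M_t=\int_0^t\int[f(X_{s-},Y_{s-}+Az)-f(X_{s-},Y_{s-})]\tilde N(\dif s,\dif z)$.

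Third, I check that $M$ is a true martingale and that the integrand is bounded. Taylor's formula bounds the small-jump part of the integrand by $\|\nabla^2 f\|_\infty|Az|^2$ and the large-jump part by $2\|f\|_\infty$ (or $\|\nabla f\|_\infty|Az|$, which is integrable because $\alpha>1$). Both are $\nu$-integrable, so $(-\Delta)^{\alpha/2}_{A,2}f$ is bounded and continuous. Consequently $\E M_t=0$.

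Finally, I divide by $t$ and let $t\to0$. Because the solution is right-continuous at time $0$ and the full integrand is continuous in $(x,y)$ and locally bounded (it grows at most linearly through $b$ and $y$), dominated convergence together with a moment bound on $\sup_{s\le t}(|X_s|+|Y_s|)$ gives the limit. Under Lipschitz $b$, a first-moment bound on this supremum holds since $\alpha>1$. The limit is \eqref{generator Lm}.

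I expect the main obstacle to be this last step, namely justifying dominated convergence for the linearly growing drift terms. To handle it, I would localize with the stopping time $\tau_R=\inf\{s:|(X_s,Y_s)|>R\}$ and use $\mathbb P(\tau_R\le t)\to0$ as $t\to0$. Alternatively, I would restrict to $f\in C_c^\infty$ first, where every term is bounded, and extend by the standard closure argument.
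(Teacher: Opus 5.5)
Your proposal is correct and follows essentially the paper's argument (It\^o's formula, then divide by $t$ and let $t\to0$). The only organizational difference is that you apply It\^o's formula jointly to $f(X_t^m,Y_t^m)$, whereas the paper splits the increment into a Taylor expansion in $x$ using $X_t^m-x=\int_0^tY_s^m\,\dif s$ and an It\^o expansion of $f(x,Y_t^m)$ with $x$ frozen; your version also supplies the justifications the paper omits --- the symmetric change of truncation, $\E M_t=0$ (where the relevant bound is $|f(x,y+Az)-f(x,y)|\le\|\nabla f\|_\infty|Az|$, not the second-order Taylor bound you quote), and uniform integrability via $\E\sup_{s\le t}(|X_s^m|+|Y_s^m|)<\infty$ for $\alpha>1$ --- so the $C_c^\infty$-plus-closure fallback is not needed.
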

	
	\begin{proof}[Proof of Lemma \ref{lemma Am}]
	For $f\in \mathcal{D}(\mathcal{A}_m)$, one knows that
			\begin{align*}
				\E f(X_t^{m,x,y},Y_t^{m,x,y})-f(x,y)
				&= \E f(X_t^{m,x,y},Y_t^{m,x,y})-\E f(x,Y_t^{m,x,y}) \\
				&+\E f(x,Y_t^{m,x,y}) - f(x,y).
			\end{align*}
			
			Recall that
			\begin{align*}
				\dif X_t^m &= Y_t^m \dif t,
			\end{align*}
			then
			\begin{align*}
				X_t^{m,x,y} &= x+ \int_0^t Y_s^{m,x,y} \dif s,
			\end{align*}
			by the Taylor's explansion, one gets that
			\begin{align*}
				&\E f(X_t^{m,x,y},Y_t^{m,x,y})-\E f(x,Y_t^{m,x,y}) \\
				&= \E \int_0^1  \langle \nabla_{x+r[X_t^{m,x,y}-x]} f(x+r[X_t^{m,x,y}-x],Y_t^{m,x,y}),X_t^{m,x,y}-x \rangle \dif r,
			\end{align*}
			which implies 
			\begin{align*}
				\lim_{t\to 0} \frac{1}{t}\{ \E f(X_t^{m,x,y},Y_t^{m,x,y})-\E f(x,Y_t^{m,x,y}) \}
				&= \langle y, \frac{\partial}{\partial x} f(x,y) \rangle.
			\end{align*}
			
			Due to
			\begin{align*}
				\dif Y_t^m &= m^{-1}\left( b(X_t^m)-\gamma Y_t^m \right) \dif t + m^{-1}\sigma \dif Z_t,
			\end{align*}
			by It\^{o}'s formula (see \cite[Theorem 4.4.7]{Applebaum2009levy}), one knows that
			\begin{align*}
				\lim_{t\to 0} \frac{1}{t}\{ \E f(x,Y_t^{m,x,y}) - f(x,y) \}
				&= m^{-1} \left\langle b(x)-\gamma y, \frac{\partial}{\partial y} f(x,y) \right\rangle
				+(-\Delta)^{\alpha/2}_{ m^{-1}\sigma,2}f(x,y).
			\end{align*}
			Thus, we can get the corresponding expression in \eqref{generator Lm}.
	\end{proof}

	\begin{proof}[Proof of Proposition  \ref{lem:generator comparison}] 
		For any function $f\in \mathcal{C}^2(\R^d,\R)$, denoting 
		\begin{align*} 
		\Phi_m(x,y) = f\left(x+\gamma^{-1} my \right)  =  f(\omega),
		\end{align*} 
		one knows that 
		\begin{align*}
			\frac{\partial }{\partial x} \Phi_m(x,y) = \nabla f(\omega),
			\quad 
			\frac{\partial }{\partial y} \Phi_m(x,y) = \gamma^{-1} m \nabla f(\omega) .
		\end{align*}
		
		(i) The first two parts of $\mathcal{A}_m \Phi_m(x,y)$.
		\begin{align*}
			& m^{-1} \langle b(x)-\gamma y, \frac{\partial }{\partial y} \Phi_m(x,y) \rangle
			+\langle y, \frac{\partial }{\partial x} \Phi_m(x,y) \rangle \\
			&=m^{-1} \langle b(x)-\gamma y, 
			m \gamma^{-1} \nabla f(\omega)  \rangle 
			+\langle y, \nabla f(\omega) \rangle \\
			&=\langle \gamma^{-1} b(x),  \nabla f(\omega) \rangle.
		\end{align*}

		(ii) The non-local part of $\mathcal{A}_m \Phi_m(x,y)$. Due to 
		\begin{align*}
			\Phi_m\left(x,y+m^{-1}\sigma z  \right) - \Phi_m(x,y)
			&=f\left( x+m\gamma^{-1}(y+m^{-1}\sigma z) \right)-f\left( x+\gamma^{-1} my \right) \\
			&=f\left(\omega+\gamma^{-1}\sigma z\right)-f(\omega),
		\end{align*}
		and 
		\begin{align*}
			\langle \frac{\partial}{\partial y} \Phi_m(x,y), m^{-1} \sigma z \rangle
			&=\langle \nabla f(\omega), \gamma^{-1} \sigma z \rangle,
		\end{align*}
		one knows that
		\begin{align*}
			&(-\Delta)^{\alpha/2}_{m^{-1}\sigma,2} \Phi_m(x,y) \\
			&= \int_{\R_0^d}\left[\Phi_m\left(x,y+m^{-1}\sigma z\right) -\Phi_m(x,y)-\langle m^{-1}\sigma z, \frac{\partial}{\partial y} \Phi_m(x,y) \rangle 1_{ \{ |m^{-1}\sigma z|\leq 1 \} } \right] \frac{d_{\alpha}}{|z|^{\alpha+d}} \dif z \\
			&=\int_{\R_0^d}
			\left[
			f\left(\omega+\gamma^{-1}\sigma z\right)-f(\omega)-\langle \nabla f(\omega),\gamma^{-1}\sigma z \rangle 1_{\{ |m^{-1}\sigma z|\leq 1 \} } \right] \frac{d_{\alpha}}{|z|^{\alpha+d}} \dif z \\
			&=\int_{\R_0^d}
			\left[
			f\left(\omega+\gamma^{-1} \sigma z\right)-f(\omega)-\langle \nabla f(\omega), \gamma^{-1}\sigma z \rangle 1_{\{ |\gamma^{-1} \sigma z|\leq 1 \} } \right] \frac{d_{\alpha}}{|z|^{\alpha+d}} \dif z \\
			&=(-\Delta)^{\alpha/2}_{\gamma^{-1} \sigma }f(\omega).
		\end{align*}
		Indeed, the third equality holds from below: for any fix $m>0$, by the symmetry of the L\'{e}vy measure and the oddness of the linear term, the truncation function $1_{ \{ |m^{-1}\sigma z|\leq 1 \} }$ can be replaced by $1_{ \{ |\gamma^{-1}\sigma z| \leq 1  \}  }$, that is,
		\begin{align*}
			\int_{\R_0^d} \langle \nabla_{\omega} f(\omega),\gamma^{-1}\sigma z \rangle \left[ 1_{ \{ |\gamma^{-1}\sigma z| \leq 1  \}  } - 1_{ \{ |m^{-1}\sigma z|\leq 1 \} }    \right] \frac{d_{\alpha}}{|z|^{\alpha+d}} \dif z =0. 
		\end{align*}
		Combining above two steps, one knows that
		\begin{align*}
			\mathcal{A}_m \Phi_m(x,y)
			&=\langle \gamma^{-1} b(x),   \nabla f(\omega) \rangle +(-\Delta)^{\alpha/2}_{\gamma^{-1} \sigma }f(\omega) \\
			&=\mathcal{A}f(\omega)
			+ \langle \gamma^{-1} (b(x)-b(\omega)), \nabla f(\omega) \rangle.
		\end{align*}
		Combining this with $\| \nabla f \|_{\infty}<\infty$, Lipschitz continuous for $b$ and \eqref{e:omega}, we can get
		\begin{align*}
		|\mathcal{A}_m \Phi_m(x,y)
		-\mathcal{A}f(\omega)|
		&\leq C\| \nabla f \|_{\infty} |x-\omega|	
		\leq Cm\| \nabla f \|_{\infty} |y|.
		\end{align*}
		The proof of Proposition \ref{lem:generator comparison} is complete.
	\end{proof}

	\section{Ergodicity}\label{org:ergodicity}
	\subsection{The underdamped Langevin system}
	The exponential ergodicity for process $(X_t^m,Y_t^m)_{t\geq 0}$ can be obtained from \cite[Theorem 1.1]{Bao2022coupling}.
	\begin{proposition}\label{org:kinetic-ergodicity}
		For $0<m\leq m_0$, with $m_0$ defined in \eqref{E:small-mass-range}, and under Assumption \ref{Assumption b}, the process $(X_t^m,Y_t^m)_{t\geq 0}$ in \eqref{e:stableHamsys} are exponential ergodicity with ergodic measures $\mu_m$.
	\end{proposition}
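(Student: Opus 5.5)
The plan is to deduce Proposition \ref{org:kinetic-ergodicity} from \cite[Theorem 1.1]{Bao2022coupling} by checking its hypotheses for the system \eqref{e:stableHamsys}, rewritten in the normalized form
\begin{align*}
\dif X_t^m = Y_t^m\dif t,\qquad \dif Y_t^m = \left(m^{-1}b(X_t^m)-m^{-1}\gamma Y_t^m\right)\dif t + m^{-1}\sigma\dif Z_t .
\end{align*}
This is a stochastic Hamiltonian system with additive $\alpha$-stable noise. The first check is the noise. Since $\sigma\sigma^T$ is strictly positive definite, $m^{-1}\sigma$ is invertible. The L\'evy measure of $m^{-1}\sigma Z$ is then the image of $\nu$ in \eqref{levy measure}. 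Its absolutely continuous part is bounded below by $c\,|z|^{-d-\alpha}1_{\{|z|\le 1\}}$ with $c=c(m,\sigma)>0$, and this is the lower-bound (nondegeneracy) condition on the jump part used in the coupling-by-reflection/refined basic coupling argument of \cite{Bao2022coupling}. The drift $(y,m^{-1}b(x)-m^{-1}\gamma y)$ is globally Lipschitz by Assumption \ref{Assumption b}. Hence the SDE has a unique strong nonexplosive solution, and the regularity hypotheses are satisfied.

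The main step is the dissipativity/Lyapunov-type structural condition in \cite{Bao2022coupling}. This condition is stated in terms of a twisted quadratic form in $(x-x',y-y')$, and it is where the restriction $m\le m_0$ from \eqref{E:small-mass-range} enters. I would construct the distance
\begin{align*}
\Psi(x,y)=a|x|^2+2\langle x,y\rangle m+ b'm|y|^2 \quad(\text{applied to differences}),
\end{align*}
with coefficients $a,b'$ chosen in terms of $\gamma,c_1,K$. Applying the deterministic part of the generator to $\Psi$ at the difference of two solutions, the following ingredients control the cross terms. First, $\langle x-x',b(x)-b(x')\rangle\le -c_1|x-x'|^2+c_2$. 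Second, $|b(x)-b(x')|\le K|x-x'|$. Third, Young's inequality. Together these should give
\begin{align*}
\mathcal{L}\Psi \le -\lambda\Psi + C
\end{align*}
with $\lambda>0$, provided the discriminant-type inequality $16K^2m+2c_1(1+2K)m+c_1^2m\le 4c_1\gamma^2$ holds. This is exactly $m\le m_0$. The same computation with the Lyapunov function $V=1+\Psi^{\beta/2}$, $\beta<\alpha$, gives the moment condition required for the existence of an invariant measure; the nonlocal part contributes only a bounded term since $\beta<\alpha$.

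With these conditions verified, \cite[Theorem 1.1]{Bao2022coupling} gives a contraction in a suitable Wasserstein-type (or weighted total variation) distance:
\begin{align*}
W_\psi(\delta_{(x,y)}P_t^m,\delta_{(x',y')}P_t^m)\le Ce^{-\lambda t}\psi((x,y),(x',y')).
\end{align*}
From this, a standard Cauchy-sequence argument for $\delta_{(x,y)}P_t^m$ in the complete space of probability measures with finite $\beta$-moment yields existence and uniqueness of $\mu_m$. The exponential convergence of $\nu P_t^m$ to $\mu_m$ is the desired exponential ergodicity.

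The obstacle I expect is matching the precise form of the structural assumption in \cite{Bao2022coupling} to the mass-scaled coefficients. The constants there must be tracked through the factor $m^{-1}$ on both drift and noise, and the admissible range of the twisting coefficient must reduce to \eqref{E:small-mass-range}. I would first attempt the explicit quadratic-form computation above and read off the constraint on $m$.
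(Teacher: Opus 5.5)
Your route is the paper's. Reduce to \cite[Theorem 1.1]{Bao2022coupling} and check three things: the drift is Lipschitz, a Lyapunov condition holds, and the L\'evy measure is nondegenerate (the paper takes the last from Lemmas 4.1--4.2 there). In the paper the Lyapunov condition is just the one-point inequality \eqref{E:Lyapunov condition} for $V_m=(1+H_m)^{\beta/2}$, with $H_m=|x+\gamma^{-1}my|^2+|\gamma^{-1}my|^2$; no computation on differences $(x-x',y-y')$ is needed.

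One step fails as written, though it is easily repaired: $V=1+\Psi^{\beta/2}$ cannot serve as the Lyapunov function. It is not $C^2$ at the origin, and $\mathcal{A}_mV$ is not even defined there. Because $\beta<\alpha$, its jump part is unbounded above near $(0,0)$. At the points $(x,-x/b')$, where $y\mapsto\Psi(x,y)$ is minimal, the $b$-terms of the drift cancel, so the drift part stays bounded. The jump part there is $\int[(\delta+b'm|w|^2)^{\beta/2}-\delta^{\beta/2}]\,\nu_m(\mathrm{d}w)$, with $\delta=(a-m/b')|x|^2$, which tends to $+\infty$ as $x\to0$. So $\mathcal{A}_mV\le-\lambda V+C$ fails for every finite $C$. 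Use $V=(1+\Psi)^{\beta/2}$, as in \eqref{E:underdamped-Lyapunov}; the second-difference bound \eqref{E:SecondDiff_W} then gives the bounded jump contribution you want.

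Your claim that your quadratic form produces exactly \eqref{E:small-mass-range} was read off from $m_0$, not derived, and it is not what your $\Psi$ gives. Since $\gamma H_m=\gamma|x|^2+2m\langle x,y\rangle+2\gamma^{-1}m^2|y|^2$, the paper's form is yours with $a=\gamma$ and the $m$-dependent choice $b'=2m/\gamma$. Its $m_0$ comes from requiring $C_1-2\gamma/m\le-\kappa$ in the $(\omega,p)$ coordinates. With $b'$ independent of $m$ you get a different restriction. Taking $a=\gamma$ (which cancels the $\langle x,y\rangle$ terms) and $b'=c_1\gamma/K^2$, the quadratic part of the drift of $\Psi$ is negative definite whenever $m<c_1\gamma^2/(2K^2)$. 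That range contains $(0,m_0]$, so your argument still covers the proposition, which concerns each fixed $m$.

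The paper's $m^2|y|^2$ weight earns its keep later. It makes the jump contribution bounded uniformly in $m$, whereas your weight $b'm|y|^2$ only gives a bound of order $m^{-\alpha/2}$, and Lemma~\ref{lem:moment estimates} needs the uniform version.
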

	
	\begin{proof}[Proof of Proposition~\ref{org:kinetic-ergodicity}]
	The exponential ergodicity for process $(X_t^m,Y_t^m)_{t\geq 0}$ can be obtained from \cite[Theorem 1.1]{Bao2022coupling} by verifying the following conditions.

		(i) Under global Lipschitz continuous condition for $b$ in Assumption \ref{Assumption b}, we also know that the coefficient $b(x)-\gamma y$ is also global Lipschitz continuous, which satisfies the conditon $(A_0)$ in \cite[Theorem 1.1]{Bao2022coupling} holds.
		
		(ii) Under the dissipative condition for $b$ in Assumption \ref{Assumption b}, the Lyapunon condition holds in \eqref{E:Lyapunov condition} for the Lyapunov function $V_m$ in \eqref{E:underdamped-Lyapunov}, which implies the conditon $(A_1)$ in \cite[Theorem 1.1]{Bao2022coupling}.
		
		(iii) It follows from Lemmas 4.1 and 4.2 from~\cite{Bao2022coupling} that the L\'{e}vy measure $\nu$ in \eqref{levy measure} satisfies the conditon $(A_2)$ in \cite[Theorem 1.1]{Bao2022coupling}.
	\end{proof}

	\subsection{The overdamped limiting system}
	We can also get the exponential ergodicity for limit SDE $(X_t)_{t\geq 0}$ in \eqref{X SDE}, that is,
	\begin{proposition}\label{pro ergodic X}
		Under Assumption \ref{Assumption b}, the limit SDE $(X_t)_{t\geq 0}$ in \eqref{X SDE} is exponential ergodicity with ergodic measure $\mu$: There exist constants $C,c>0$ such that 
		\begin{align*}
			\mathcal{W}_1(\mathcal{L}(X_t^x),\mu) & \leq C e^{-ct}(1+|x|).
		\end{align*}
		Additionally, $\mu(|x|)<\infty$.
	\end{proposition}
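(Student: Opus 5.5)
The plan is to deduce Proposition \ref{pro ergodic X} from \cite[Theorem 1.3]{Liang2020Gradient}, which gives exponential contraction in $\mathcal{W}_1$ for SDEs driven by additive rotationally symmetric $\alpha$-stable noise whose drift is dissipative outside a compact set. First I rewrite \eqref{X SDE} as $\dif X_t=\tilde b(X_t)\dif t+\tilde\sigma\dif Z_t$ with $\tilde b=\gamma^{-1}b$ and $\tilde\sigma=\gamma^{-1}\sigma$. Then $\tilde\sigma\tilde\sigma^T$ is strictly positive definite and $\tilde b$ is globally Lipschitz with constant $\gamma^{-1}K$. Next I convert the dissipativity condition of Assumption \ref{Assumption b} into the form needed there. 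For $|x-y|\geq r_0:=\sqrt{2c_2/c_1}$,
\begin{align*}
\langle x-y,\tilde b(x)-\tilde b(y)\rangle\leq -\gamma^{-1}c_1|x-y|^2+\gamma^{-1}c_2\leq -\tfrac{c_1}{2\gamma}|x-y|^2 .
\end{align*}
For $|x-y|<r_0$, the Lipschitz bound gives $\langle x-y,\tilde b(x)-\tilde b(y)\rangle\leq \gamma^{-1}K|x-y|^2$. Thus the usual one-sided condition holds with a function $\kappa(r)$ that equals $\gamma^{-1}K r$ for $r<r_0$ and $-\frac{c_1}{2\gamma}r$ for $r\geq r_0$. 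This is exactly the "dissipative at infinity" hypothesis used in the reflection-type coupling for stable noise.

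The only nontrivial point concerns the noise. The cited theorem may be stated for $\sigma=I$, or for a Lévy measure comparable to the rotationally symmetric one. In that case I check that the Lévy measure of $\tilde\sigma Z$ is $\nu\circ\tilde\sigma^{-1}$. Since $\tilde\sigma$ is invertible, this measure has density comparable (with constants depending on the singular values of $\tilde\sigma$) to $|z|^{-d-\alpha}$. Its symmetrized overlap quantity $\inf_{|x|\leq \delta}(\nu\circ\tilde\sigma^{-1}\wedge(\delta_x*\nu\circ\tilde\sigma^{-1}))(\R^d)$ is therefore positive, as in Lemmas 4.1--4.2 of \cite{Bao2022coupling}. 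Alternatively, one can change variables $\tilde X=\tilde\sigma^{-1}X$. This reduces to the case $\sigma=I$ with drift $\tilde\sigma^{-1}\tilde b(\tilde\sigma\,\cdot)$, which is still Lipschitz and dissipative at infinity up to constants. $\mathcal{W}_1$ changes only by a factor $\|\tilde\sigma\|\|\tilde\sigma^{-1}\|$. I expect this verification of the noise hypothesis to be the main technical obstacle, and I would first try the change of variables.

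The cited result gives $\mathcal{W}_1(\mathcal{L}(X_t^x),\mathcal{L}(X_t^{x'}))\leq Ce^{-ct}|x-x'|$. To turn this into existence of $\mu$ and the bound in the statement, I use a Lyapunov estimate with $V(x)=(1+|x|^2)^{1/2}$. Since $\alpha>1$, $\nabla V$ is bounded and $\nabla^2V$ is bounded, so the nonlocal part $(-\Delta)^{\alpha/2}_{\tilde\sigma}V$ is bounded. This follows by splitting small jumps (Taylor, $|z|^2$) and large jumps ($|z|$, integrable since $\alpha>1$). Dissipativity gives $\langle \tilde b(x),\nabla V(x)\rangle\leq -c V(x)+C$. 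Hence $\mathcal{A}V\leq -cV+C$, and $\sup_t\E|X_t^x|\leq C(1+|x|)$.

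The contraction then shows that $t\mapsto\mathcal{L}(X_t^x)$ is Cauchy in $\mathcal{W}_1$: compare $X_{t+s}^x$ with $X_t^{X_s^x}$ using the Markov property. So $\mathcal{L}(X_t^x)$ converges to a limit $\mu$ independent of $x$, which is invariant by the Feller property. Lower semicontinuity of $x\mapsto|x|$ under weak convergence gives $\mu(|x|)<\infty$. Finally,
\begin{align*}
\mathcal{W}_1(\mathcal{L}(X_t^x),\mu)\leq \int\mathcal{W}_1(\mathcal{L}(X_t^x),\mathcal{L}(X_t^{x'}))\,\mu(\dif x')\leq Ce^{-ct}(|x|+\mu(|\cdot|))\leq Ce^{-ct}(1+|x|).
\end{align*}
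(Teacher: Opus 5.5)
Your argument follows the paper's route. The $\mathcal{W}_1$-contraction comes from \cite[Theorem 1.3]{Liang2020Gradient}, and the first moment comes from $V(x)=(1+|x|^2)^{1/2}$ with $\mathcal{A}V\le -cV+C$. Your proof is in fact more complete than the paper's in two places. First, you construct $\mu$ and derive the displayed bound by integrating the contraction against $\mu$; the paper leaves this implicit. Second, you get $\mu(|x|)<\infty$ from $\sup_t\E V(X_t^x)\le C(1+|x|)$ together with lower semicontinuity. The paper's stationarity inequality $\mu(V)\le e^{-ct}\mu(V)+C$ only yields $\mu(V)\le C$ if $\mu(V)<\infty$ is already known.

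One claim is false, though, and it is the route you say you would try first. The substitution $\tilde X=\tilde\sigma^{-1}X$ does not preserve dissipativity. Put $S=\tilde\sigma$, $\hat b(u)=S^{-1}\tilde b(Su)$, $x=Su$, $y=Sv$. Then $\langle u-v,\hat b(u)-\hat b(v)\rangle=\langle (SS^T)^{-1}(x-y),\tilde b(x)-\tilde b(y)\rangle$, and Assumption~\ref{Assumption b} controls this only when $SS^T$ is a multiple of the identity.

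For a concrete failure, take $d=2$ and $b(x)=Ax$ with $A=\bigl(\begin{smallmatrix}-1&3/2\\-3/2&-1\end{smallmatrix}\bigr)$, so $\langle x,Ax\rangle=-|x|^2$ and the assumption holds. With $\tilde\sigma=\mathrm{diag}(1,10^{-1/2})$, the matrix $\mathrm{diag}(1,10)A$ has indefinite symmetric part. Hence the transformed drift is expansive along some direction at every scale, and the cited theorem cannot be applied to the transformed equation. This is so even though the transformed process does contract, being a linear image of the original one.

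So drop the change of variables and keep your other route. The process $\tilde\sigma Z$ is a L\'evy process with L\'evy measure $\nu\circ\tilde\sigma^{-1}$. Because $\tilde\sigma$ is invertible, its density lies between two constant multiples of $|z|^{-d-\alpha}$, so the jump and overlap hypotheses transfer from $\nu$. The paper simply cites the theorem without making this check.
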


	\begin{proof}[Proof of Proposition~\ref{pro ergodic X}]
	(i) The exponenetial ergodicity for process $(X_t)_{t\geq 0}$ can be obtained by \cite[Theorem 1.3]{Liang2020Gradient} (or \cite[Theorem 1.2]{Wang2016Lp}, or 
	\cite[Lemma 2.2]{Jin2026Optimal})  under the dissipative condition in Assumotion \ref{Assumption b}, that is, ther exist some strictly positive constants $c,C$ such that
	\begin{align*}
	\mathcal{W}_1(\mathcal{L}(X_t^x),\mathcal{L}(X_t^y)) &\leq Ce^{-c t} |x-y|,\quad \forall x,y \in \R^d, t>0,
	\end{align*}
	which also implies the gradient estimate for semigroup $P_t$ of process $(X_t)_{t\geq 0}$, that is, for Lipschitz continuous function $h$, one knows that 
	\begin{align*}
	|\nabla P_t h(x)| &\leq C e^{-c t}. \quad \forall x \in \R^d, t>0.
	\end{align*}
			
	(ii) The moment estimate for ergodic measure $\mu$ can be obtained by Lyapunov condition. 	
	For the stable SDE~\eqref{X SDE}, let
	\begin{equation*}
	V(x)=(1+|x|^2)^{1/2}.
	\end{equation*}
	Using the dissipative estimate \eqref{e:drift-coercivity} shown in the next section, with $a$ and $B$ defined there, we obtain
	\begin{align*}
	\gamma^{-1}\langle b(x),\nabla V(x)\rangle
	&\leq\frac{1}{\gamma}(1+|x|^2)^{-1/2}(-a|x|^2+B)
	\leq-\frac{ a}{\gamma}V(x)+\frac{a+B}{\gamma}.
	\end{align*}
	The Hessian and second-difference argument used in
	\eqref{E:SecondDiff_W} applies equally to $V$ on $\R^d$ with the choice $\beta=1$.
			By symmetry and with $A=\gamma^{-1}\sigma$, its jump contribution is
			\[
			\frac12\int_{\R_0^d}[V(x+Az)+V(x-Az)-2V(x)]\nu(\dif z),
			\]
			whose absolute value is bounded by
			\[
			C_1\left(\int_{|z|\leq1}|z|^2\nu(\dif z)
			+\int_{|z|>1}|z|\nu(\dif z)\right)\leq  C_1.
			\]
			Hence there exist constants $\widetilde c_1>0,C_1>0$ such that
			\begin{align*}
				\mathcal{A}V(x) &\leq-\widetilde c_1 V(x)+C_1.
			\end{align*}

			Using It\^{o}'s formula and taking expectation with respective to ergodic measure $\mu$, that is, $X_0 \sim \mu$, by stationrity 
			\begin{align*}
				\E_{\mu} V(X_t) &= \mu(V), \forall t\geq 0. 
			\end{align*}
			Thus, one knows that
			\begin{align*}
				\E_{\mu} V(X_t)
				\leq e^{-\widetilde{c}_1 t} \E_{\mu} V(x)
				+\frac{C_1}{\widetilde{c}_1}(1-e^{-\widetilde{c}_1 t}),
			\end{align*}
			which implies that
			\begin{align*}
				\mu(V) &\leq C.
			\end{align*}
			Combining with the definition of $V$, one can get the desired estimate.
	\end{proof}

	\section{Moment estimates}\label{org:moments}

	Throughout this section, $\beta\in(0,\alpha)$ denotes a moment order,
	whereas $p=\gamma^{-1}my\in\R^d$ denotes the scaled momentum. Constants may depend on $\beta,\alpha,d,\gamma,\sigma$ and
	the coefficients in Assumption~\ref{Assumption b}, but not on $m,t,x,y$.
	
	\begin{proof}[Proof of Lemma \ref{lem:moment estimates}]
		
		We first use a Lyapunov function to estimate the position moments.
		We then apply variation of constants to obtain the sharper velocity
		estimate, keeping its initial layer explicitly.
		
		\underline{(i) Position moments.}
		Recall that the generator of the Langevin system is
		\begin{align*}
			&\mathcal A_m f(x,y)
			=m^{-1}\langle b(x)-\gamma y,\frac{\partial}{\partial y}  f(x,y)\rangle
			+\langle y,\frac{\partial}{\partial x} f(x,y)\rangle\\
			&\qquad \qquad  +\int_{\R_0^d}\Bigl[
			f(x,y+m^{-1}\sigma z)-f(x,y)
			-\langle m^{-1}\sigma z,\frac{\partial}{\partial y} f(x,y)\rangle
			1_{\{|m^{-1}\sigma z|\leq1\}}
			\Bigr]\nu(\dif z).
		\end{align*}
		Denote by $\omega=x+\gamma^{-1}my$ and $p=\gamma^{-1}my$
		the corrected position and scaled momentum. We set
		\begin{equation}\label{E:underdamped-Lyapunov}
			\begin{aligned}
				H_m(x,y)&=\left|x+\gamma^{-1}my\right|^2
				+\left|\gamma^{-1}my\right|^2=|\omega|^2+|p|^2,\\
				V_m(x,y)&=(1+H_m(x,y))^{\beta/2}.
			\end{aligned}
		\end{equation}
		Since $x=\omega-p$ and $my=\gamma p$, we have
		\begin{align*}
			|x|^2+m^2|y|^2
			&=|\omega-p|^2+\gamma^2|p|^2
			\leq(2+\gamma^2)H_m(x,y),\\
			H_m(x,y)&\leq2|x|^2+3\gamma^{-2}m^2|y|^2
			\leq C(|x|^2+m^2|y|^2).
		\end{align*}
		The comparison constants are independent of $m$.
		
		For the solution starting from $(x,y)$, define
		\[
		\Omega_t^m=X_t^{m}+\gamma^{-1}mY_t^{m},\qquad
		p_t^m=\gamma^{-1}mY_t^{m},\qquad u_t^m=(\Omega_t^m,p_t^m).
		\]
		Then $u_0^m=(x+\gamma^{-1}my,\gamma^{-1}my)$, and
		\begin{align}\label{E:ScaledSys}
			\begin{cases}
				\dif\Omega_t^m=\gamma^{-1}b(\Omega_t^m-p_t^m)\dif t
				+\gamma^{-1}\sigma\dif Z_t,\\
				\dif p_t^m=\bigl(\gamma^{-1}b(\Omega_t^m-p_t^m)-\gamma m^{-1}p_t^m\bigr)\dif t
				+\gamma^{-1}\sigma\dif Z_t.
			\end{cases}
		\end{align}
		This change of variables removes the negative power of $m$ from the
		noise coefficient while keeping the fast dissipation in the momentum
		equation. In the new coordinates $u=(\omega,p)\in\R^{2d}$, write
		\[
		Q(\omega,p)=|\omega|^2+|p|^2=|u|^2,
		\qquad W(\omega,p)=(1+Q(\omega,p))^{\beta/2}.
		\]
		Let $G_m$ be the generator of \eqref{E:ScaledSys}. For suitable $g$,
		\begin{align*}
			G_mg(u)&=D_mg(u)+J_mg(u),\\
			D_mg(u)&=\left\langle \frac{\partial}{\partial \omega} g(u),\gamma^{-1}b(\omega-p)\right\rangle
			+\left\langle \frac{\partial}{\partial p} g(u),\gamma^{-1}b(\omega-p)-\gamma m^{-1}p\right\rangle,\\
			J_mg(u)&=\int_{\R_0^d}\bigl[
			g(u+w_m(z))-g(u)-\langle w_m(z),\nabla g(u)\rangle
			1_{\{|w_m(z)|\leq1\}}\bigr]\nu(\dif z),
		\end{align*}
		where
		\[
		w_m(z)=\gamma^{-1}
		\begin{pmatrix}\sigma z\\\sigma z\end{pmatrix}\in\R^{2d},
		\qquad z\in\R^d.
		\]

		\noindent\textbf{Drift part $D_mW$.}
		By dissipativity and Young's inequality,
		\begin{align}\label{e:drift-coercivity}
			\notag\langle\omega,b(\omega)\rangle
			&\leq-c_1|\omega|^2+c_2+\langle\omega,b(0)\rangle\\
			&\leq-\frac{c_1}{2}|\omega|^2+c_2+\frac{|b(0)|^2}{2c_1}
			=:-a|\omega|^2+B.
		\end{align}
		Writing $K$ for the Lipschitz constant of $b$, we obtain
		\[
		\langle\omega+p,b(\omega-p)\rangle
		\leq-a|\omega|^2+B+2K|\omega||p|+K|p|^2+|b(0)||p|.
		\]
		We apply the drift operator $D_m$ to the quadratic function $Q$ to obtain
		\begin{align*}
			D_mQ(\omega,p)
			&=\frac2\gamma\langle\omega+p,b(\omega-p)\rangle
			-\frac{2\gamma}{m}|p|^2\\
			&\leq-\frac{2a}{\gamma}|\omega|^2+\frac{2B}{\gamma}
			+\frac{4K}{\gamma}|\omega||p|
			+\frac{2K}{\gamma}|p|^2
			+\frac{2|b(0)|}{\gamma}|p|-\frac{2\gamma}{m}|p|^2.
		\end{align*}
		Using
		\[
		4K|\omega||p|\leq a|\omega|^2+\frac{4K^2}{a}|p|^2,
		\qquad 2|b(0)||p|\leq|p|^2+|b(0)|^2,
		\]
		we get
		\[
		D_mQ(\omega,p)\leq-\kappa|\omega|^2
		+\left(C_1-\frac{2\gamma}{m}\right)|p|^2+C_0,
		\]
		where
		\[
		\kappa=\frac a\gamma,\qquad
		C_1=\frac{4K^2/a+2K+1}{\gamma},\qquad
		C_0=\frac{2B+|b(0)|^2}{\gamma}.
		\]
		Choose
		\begin{align*}
			m_0=\min\left\{1,\frac{2\gamma}{C_1+\kappa}\right\},
		\end{align*}
        which is the same as $m_0$ in \eqref{E:small-mass-range}. For $0<m\leq m_0$, it follows that
		$D_mQ\leq-\kappa Q+C_0$. Since $\beta<\alpha<2$,
		\begin{align*}
			D_mW(\omega,p)
			&=\frac\beta2(1+Q)^{\beta/2-1}D_mQ\\
			&\leq-\frac{\beta\kappa}{2}(1+Q)^{\beta/2}
			+\frac\beta2(\kappa+C_0)(1+Q)^{\beta/2-1}\\
			&\leq-c_\beta W(\omega,p)+C_\beta,
		\end{align*}
		where the constant  $c_\beta=\beta\kappa/2>0$.

		\noindent\textbf{Jump part $J_mW$.}
		Symmetry of the L\'evy measure gives
		\begin{equation}\label{E:JumpSymmetry}
			J_mW(u)=\frac12\int_{\R_0^d}
			\bigl[W(u+w_m(z))+W(u-w_m(z))-2W(u)\bigr]\nu(\dif z).
		\end{equation}
		Here $|w_m(z)|\leq C|z|$ uniformly for $m\leq m_0$.
		We claim that
		\begin{equation}\label{E:SecondDiff_W}
			|W(u+w)+W(u-w)-2W(u)|\leq C_\beta
			\begin{cases}
				|w|^2,&|w|\leq1,\\
				|w|^\beta,&|w|>1,
			\end{cases}
		\end{equation}
		for all $u,w\in\R^{2d}$. Indeed,
		\[
		\|\nabla^2W(u)\|\leq C_\beta(1+|u|^2)^{(\beta-2)/2}
		\leq C_\beta.
		\]
		For $|w|\leq1$, the estimate follows from the bounded Hessian and
		Taylor's formula. For $|w|>1$ and $|u|\leq2|w|$, the growth of $W$ gives
		\[
		|W(u+w)+W(u-w)-2W(u)|
		\leq W(u+w)+W(u-w)+2W(u)\leq C_\beta|w|^\beta.
		\]
		If $|w|>1$ and $|u|>2|w|$, then $|u\pm sw|\geq|u|/2$ for $s\in[0,1]$.
		Taylor's formula therefore gives
		\begin{align*}
			&|W(u+w)+W(u-w)-2W(u)|\\
			&\quad\leq |w|^2\int_0^1(1-s)
			\bigl(\|\nabla^2W(u+sw)\|+\|\nabla^2W(u-sw)\|\bigr)\dif s\\
			&\quad\leq C_\beta|w|^2|u|^{\beta-2}
			\leq C_\beta|w|^\beta.
		\end{align*}
		This proves \eqref{E:SecondDiff_W}.
		Consequently,
		\begin{align*}
			|J_mW(u)|
			&\leq C_\beta\int_{|z|\leq1}|z|^2\nu(\dif z)
			+C_\beta\int_{|z|>1}|z|^\beta\nu(\dif z)\\
			&\leq C_\beta\left(\int_0^1 s^{1-\alpha}\dif s
			+\int_1^\infty s^{\beta-\alpha-1}\dif s\right)
			\leq C_\beta.
		\end{align*}
		Combining the drift and jump estimates yields
		$G_mW\leq-c_\beta W+C_\beta$, or equivalently,
		\begin{equation}\label{E:Lyapunov condition}
			\mathcal A_mV_m(x,y)\leq-c_\beta V_m(x,y)+C_\beta.
		\end{equation}
		
	Using It\^{o}'s formula and taking expectation with respective to ergodic measure $\mu_m$ (see Proposition \ref{org:kinetic-ergodicity}), that is, $(X_0^m,Y_0^m) \sim \mu_m$, by stationarity, 
			\begin{align*}
				\E_{\mu_m} V_m(X_t^m,Y_t^m) &= \mu_m(V_m), \forall t\geq 0. 
			\end{align*}
			Thus, one knows that
			\begin{align}\label{E:Vm-time-bound}
				\E_{\mu_m} V_m(X_t^{m},Y_t^{m})
				\leq e^{-c_\beta t} \E_{\mu_m} V_m(x,y)
				+\frac{C_\beta}{c_\beta}(1-e^{-c_\beta t}), \forall t\geq 0.
			\end{align}
			Taking $t=1$, it implies that
			\begin{align*}
				\mu_m(V_m) &\leq C.
			\end{align*}
			Combining with the definition of $V_m$, one knows that 
			\begin{align*}
				\E_{\mu_m}|X_t^{m}|^\beta
				&\leq C,
			\end{align*}
			which implies \eqref{Xm moment estimate}.
		
			\underline{(ii) Velocity moments.} Let the initial state $(X_0^m,Y_0^m) \sim \mu_m$. We know the process $(X_t^m,Y_t^m)_{t\geq 0}$ is stationary and $\mathcal{L}(X_t^m,Y_t^m) \overset{d}{=} \mu_m$ for all $t\ge 0$, variation of constants gives
			\begin{align}\label{E:velocity-variation}
				Y_t^{m}
				&=e^{-\gamma t/m} Y_0^m
				+m^{-1}\int_0^t e^{-\gamma(t-s)/m}b(X_s^{m})\dif s
				+m^{-1}\int_0^t e^{-\gamma(t-s)/m}\sigma\dif Z_s  \nonumber \\
				& \eqqcolon e^{-\gamma t/m}Y_0^m+B_t^m+S_t^m.
			\end{align}
			
			Due to $Y_t^m \overset{d}{=} Y_0^m$, by setting $t=m_0\gamma^{-1}\log 2$ in \eqref{E:velocity-variation}, one knows that
			\begin{align}\label{E|Y|}
				(1-e^{-\gamma t/m}) \E_{\mu_m} |Y_t^m| &\geq \frac{1}{2}  \E_{\mu_m} |Y_t^m|.
			\end{align}
			
			By the linear growth of $b$ and the position moment estimate \eqref{Xm moment estimate} which is established in the last step, we know that 
			\begin{align*}
				\sup_{0\leq s\leq t} \E_{\mu_m}|b(X_s^{m})|^\beta\leq C_\beta.
			\end{align*}
			The nonnegative kernel
			$k_{m,t}(s)=m^{-1}e^{-\gamma(t-s)/m}$ has integral
			$\gamma^{-1}(1-e^{-\gamma t/m})\leq\gamma^{-1}$.
			Weighted Jensen's inequality gives
			\begin{align}\label{E:velocity-drift-bound}
				\E|B_t^m|^\beta
				&\leq\left(\int_0^t k_{m,t}(s)\dif s\right)^{\beta-1}
				\int_0^t k_{m,t}(s)\E|b(X_s^{m})|^\beta\dif s
				\leq C_\beta.
			\end{align}

			For $\beta\in (0,\alpha)$, it follows from \cite[Section 4.3.4]{Applebaum2009levy} (or \cite[Section 3.4]{Samorodnitsky1994stable}) that
			\begin{align}\label{E:velocity-noise-bound}
				\E\left|S_t^m \right|^\beta 
				&\leq C\left(
				\int_0^t m^{-\alpha} e^{-\alpha \gamma(t-s)/m} \dif s
				\right)^{\beta/\alpha} 
				\leq Cm^{-\beta(1-1/\alpha)}.
			\end{align}
			Using $|a+b+c|^\beta\leq C_\beta(|a|^\beta+|b|^\beta+|c|^\beta)$
			in \eqref{E:velocity-variation}, it follows from \eqref{E|Y|}-\eqref{E:velocity-noise-bound} that for $1\leq\beta<\alpha$,
			\begin{align*}
				\E_{\mu_m}
				|Y_t^{m}|^{\beta}
				&\leq C(1-e^{-\gamma t/m}) \E_{\mu_m}
				|Y_t^{m}|^{\beta} 
				\leq 
				C\E_{\mu_m} |B_t^m|^{\beta}
				+C\E_{\mu_m} |S_t^m|^{\beta}
				\leq C(1+m^{-\beta(1-1/\alpha)}).
			\end{align*}
			The proof of Lemma \ref{lem:moment estimates} is complete.
	\end{proof}

	\section{Stein equation}\label{org:stein}
    The proof of Proposition \ref{lem:newReg} relies on the following regularity estimates of the solution $f_h$ to the Stein's equation \eqref{stein equ}. The proof of these estimates are deferred to the end of this section.
	\begin{lemma}\label{pro:stein regularies}
		Under Assumption \ref{Assumption b}, for $h\in {\rm Lip}(1)$, let $f_h$ be the solution for Stein's equation \eqref{stein equ}, there exists constant $C>0$ independent of $x$  such that
		\begin{align*}
			|f_h(x)| \leq C(1+|x|), \quad 
			\| \nabla f_h \|_{\infty} \leq C.
		\end{align*}
	\end{lemma}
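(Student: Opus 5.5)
We use the semigroup representation of the Stein solution. Let $P_t$ denote the semigroup of \eqref{X SDE}, and define
\begin{align*}
f_h(x) = -\int_0^\infty \bigl(P_t h(x)-\mu(h)\bigr)\,\dif t .
\end{align*}
The plan has three steps: show the integral converges with the stated bounds, read off the gradient bound, and confirm that $f_h$ solves \eqref{stein equ}.

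For convergence and the growth bound, we invoke Proposition \ref{pro ergodic X}. Since $h\in{\rm Lip}(1)$, the Kantorovich duality gives
\begin{align*}
|P_th(x)-\mu(h)| \leq \mathcal{W}_1(\mathcal{L}(X_t^x),\mu) \leq Ce^{-ct}(1+|x|).
\end{align*}
This bound is integrable in $t$, so $f_h$ is well defined and $|f_h(x)|\leq C(1+|x|)$. The first moment $\mu(|x|)<\infty$ ensures that $\mu(h)$ is finite.

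For the gradient bound, we use the contraction $\mathcal{W}_1(\mathcal{L}(X_t^x),\mathcal{L}(X_t^y))\leq Ce^{-ct}|x-y|$, also from Proposition \ref{pro ergodic X}. It gives $|P_th(x)-P_th(y)|\leq Ce^{-ct}|x-y|$, and integrating in $t$ yields
\begin{align*}
|f_h(x)-f_h(y)|\leq \frac{C}{c}|x-y| .
\end{align*}
Thus $f_h$ is Lipschitz, hence differentiable almost everywhere with $\|\nabla f_h\|_\infty\leq C$. If genuine differentiability is needed, we would use the pointwise gradient estimate $|\nabla P_th|\leq Ce^{-ct}$ stated in the proof of Proposition \ref{pro ergodic X}. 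Differentiating under the integral is then justified by dominated convergence, since this bound is uniform in $x$ and integrable in $t$. The constant is independent of $h$ because it depends only on the Lipschitz constant $1$.

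The main obstacle is verifying that $f_h$ actually solves $\mathcal{A}f_h=h-\mu(h)$ in the sense used later, where the equation is only convolved with $\rho_\varepsilon$. We would proceed as follows.
\begin{enumerate}
\item Write $P_sf_h-f_h=\int_0^s\bigl(P_th-\mu(h)\bigr)\,\dif t$. This follows from the semigroup property and Fubini, both justified by the exponential bound above.
\item Divide by $s$ and let $s\to0$. The right-hand side tends to $h(x)-\mu(h)$, using the continuity $P_th(x)\to h(x)$, which holds since $\E|X_t^x-x|\to0$.
\end{enumerate}
This identifies $f_h$ as lying in the domain of the generator in the pointwise (weak) sense, with $\mathcal{A}f_h=h-\mu(h)$.

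It remains to reconcile this weak generator with the explicit integro-differential formula \eqref{generator L}, because $f_h$ is only $C^{1}$-type. The plan is to interpret \eqref{stein equ} distributionally and test against $\rho_\varepsilon$. The nonlocal term is handled by translation invariance: after mollification, $\langle b,\nabla f_h\rangle*\rho_\varepsilon$ is well defined because $\nabla f_h$ is bounded and $b$ has linear growth. This is exactly the form needed for \eqref{e:newStein}.
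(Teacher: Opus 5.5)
Your proposal is correct and follows the paper's proof: the semigroup representation of $f_h$, the $\mathcal{W}_1$ exponential ergodicity bound integrated in $t$ for the linear growth, and the $\mathcal{W}_1$ contraction integrated in $t$ for the gradient. Your version integrates the Lipschitz bound $|P_th(x)-P_th(y)|\le Ce^{-ct}|x-y|$ directly, which is slightly cleaner than the paper's pointwise $|\nabla P_th|\le Ce^{-ct}$ because it does not presuppose differentiability of $P_th$. Your extra verification that this $f_h$ solves \eqref{stein equ} goes beyond the lemma and the paper, and its final distributional step is only sketched, but the stated bounds do not depend on it.
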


	\begin{proof}[Proof of Proposition \ref{lem:newReg}]
	(i) For $h\in {\rm Lip}(1)$, it follows from Lemma \ref{pro:stein regularies} that the solution of Stein's equation $f_h\in \mathcal{C}^1(\R^d)$ and one knows that
	\begin{align}\label{e:graf}
	\nabla f_{h,\varepsilon}(x)
	&=\int_{\R^d}\nabla f_{h}(x-z)\rho_\varepsilon(z)\dif z,	
	\end{align}
	which implies the desired estimate for $\| \nabla f_\varepsilon \|_{\infty}.$
	
	(ii) Since
	$\nabla\rho_\varepsilon(z)
	=\varepsilon^{-d-1}\nabla\rho(z/\varepsilon)$, the substitution $z=\varepsilon w$ gives
	\begin{align}\label{e:rhoL1}
	\|\nabla\rho_\varepsilon\|_{L^1}
	&=\varepsilon^{-d-1}
	\int_{\R^d}|\nabla\rho(\varepsilon^{-1}z)|\dif z
	=\varepsilon^{-1}
	\int_{\mathbb R^d}|\nabla\rho(w)|\dif w
	\lesssim \e^{-1}.
	\end{align}

	It follows from \eqref{e:graf} that
	\begin{align*}
	\nabla f_{h,\varepsilon}(x)
	=\int_{\R^d}\nabla f_{h}(y)\rho_\varepsilon(x-y)\dif y.
	\end{align*}
	For unit vectors \(u,v\in\mathbb R^d\), denoting by $D_u$ and $D_v$ their directional derivatives respectively, one obtains that
	\begin{align*}
	D_v D_u f_{h,\varepsilon}(x)
	&=D_v\int_{\R^d}
	\langle\nabla f_{h}(y),u\rangle\rho_\varepsilon(x-y) \dif y
	=\int_{\R^d}
	\langle\nabla f_{h}(y),u\rangle
	\langle\nabla\rho_\varepsilon(x-y),v\rangle\dif y,
	\end{align*}
	which implies that
	\begin{align*}
	|D_vD_uf_\varepsilon(x)|
	&\leq \|\nabla f_{h,\e}\|_\infty
	\int_{\R^d}|\nabla\rho_\varepsilon(x-y)|\dif y 
	=\|\nabla f_{h}\|_\infty\|\nabla\rho_\varepsilon\|_{L^1}.
	\end{align*}
	It follows from \eqref{e:rhoL1} and Lemma \ref{pro:stein regularies} that taking the supremum over $x$ and unit $u,v$ yields
	\begin{align*}
	\|\nabla^2f_{h,\varepsilon}\|_\infty
	\le\|\nabla f_{h}\|_\infty\|\nabla\rho_\varepsilon\|_{L^1}
	\lesssim \e^{-1}.
	\end{align*}
	
	(iii) Since $\rho_\varepsilon$ has integral one, for every $x\in\R^d$, one gets that
	\begin{align*}
	h(x)-h_\varepsilon(x)
	&=\int_{\R^d}
	[h(x)-h(x-z)]\rho_\varepsilon(z)\dif z.
	\end{align*}
	By the fact $h\in {\rm Lip}(1)$ and the change of variables $z=\varepsilon w$, one knows that
	\begin{align*}
	|h(x)-h_\varepsilon(x)|
	&\leq \int_{\R^d}
	|h(x)-h(x-z)|\rho_\varepsilon(z)\dif z
	\leq \int_{\R^d} |z|\rho_\varepsilon(z)\dif z \\
	&=\int_{\mathbb R^d}
	|z|\varepsilon^{-d}\rho(z/\varepsilon)\dif z =\varepsilon\int_{\mathbb R^d}|w|\rho(w)\dif w 
	\lesssim \varepsilon.
	\end{align*}

	(iv) For each $u\in\R^d$, since $b$ is Lipschitz and estimate for $\| \nabla f_{h} \|_{\infty}$ in Lemma \ref{pro:stein regularies}, one knows that
	\begin{align*}
		|R_{h,\varepsilon}(u)|
		&\leq \gamma^{-1}\int_{\R^d}
		|b(u)-b(u-z)| |\nabla f_h(u-z)|
		\rho_\varepsilon(z)\dif z \\
		&\lesssim \| \nabla f_{h} \|_{\infty} \int_{\R^d} |z| \rho_\varepsilon(z) \dif z 
		\lesssim \e.
	\end{align*}

	(v) For any fixed $\e\in (0,1)$, one knows that $\| \nabla f_{h,\e} \|_{\infty} \leq C$ and $ 
	\| \nabla^2 f_{h,\e} \|_{\infty} \leq C\e^{-1}$, which implies that
	\begin{align*}
	& (-\Delta)^{\alpha/2}_{m^{-1}\sigma,2}\Phi_{m,h,\e}(x,y)
	=\int_{|m^{-1}\sigma z|> 1} \left[\Phi_{m,h,\e}\left(x,y+m^{-1}\sigma z\right) -\Phi_{m,h,\e}(x,y) \right] \frac{d_{\alpha}}{|z|^{\alpha+d}} \dif z \\
	&+\int_{|m^{-1}\sigma z|\leq 1} \left[\Phi_{m,h,\e} \left(x,y+m^{-1}\sigma z\right) -\Phi_{m,h,\e}(x,y)- \langle m^{-1}\sigma z, \frac{\partial}{\partial y} \Phi_{m,h,\e}(x,y) \rangle \right] \frac{d_{\alpha}}{|z|^{\alpha+d}} \dif z
	\end{align*}
	is well defined and we can get the following estimate
	\begin{align*}
	|\mathcal{A}_m \Phi_{m,h,\varepsilon}(x,y)|
	\leq C(1+|x|+\varepsilon^{-1}).
	\end{align*}
	
	(vi) Recall that $\mu_m$ is the unique invariant measure so that $(X_t^m,Y_t^m)_{t\ge 0}$ is stationary with the initial pair $(X_0^m,Y_0^m)\sim \mu_m$. For each fixed $\varepsilon>0$, we know that $$\E_{\mu_m} \mathcal{A}_m \Phi_{m,h,\varepsilon}(X_t^m,Y_t^m)\equiv c_0$$ for certain constant $c_0\in \R$ and for all $t\ge 0$. By Dynkin's formula and taking expectation with respective to ergodic measure $\mu_m$, one yields that
	\begin{align*}
	0&=\E_{\mu_m} f_{h,\varepsilon}(\Omega_t^m)-\E_{\mu_m} f_{h,\varepsilon}(\Omega_0^m)
	=\int_0^t \E_{\mu_m} \mathcal{A}_m \Phi_{m,h,\varepsilon}(X_s^m,Y_s^m)\dif s
    =c_0 t, \quad \forall  t\ge0.
	\end{align*}
	Thus, we know that
	\begin{align*}
	\E_{\mu_m} \mathcal{A}_m \Phi_{m,h,\varepsilon}(X_t^m,Y_t^m)
    \equiv c_0
    =0, \quad \forall  t\ge 0.
	\end{align*}
	The proof of Proposition \ref{lem:newReg} is complete.
	\end{proof}

	\begin{proof}[Proof of Lemma \ref{pro:stein regularies}.]
		The solution of Stein equation in \eqref{stein equ} can be solved as 
		\begin{align}\label{solution f_h}
			f_h(x) &= -\int_0^{\infty} (\E h(X_t^x) - \mu(h) ) \dif t
			=-\int_0^{\infty} (P_t h(x) - \mu(h) ) \dif t,
		\end{align}
		where $P_t$ is the semigroup of process $(X_t)_{t\geq 0}$.

		(i) The regularities for $f_h$ can be proven by the exponential ergodicity for process $(X_t)_{t\geq 0}$ in Proposition \ref{pro ergodic X}, then there exist some positive constants $c$ and $C$ such that
		\begin{align*}
			|f_h(x)| &\leq \int_0^{\infty} |P_t h(x) - \mu(h)| \dif t
			\leq \int_0^{\infty} Ce^{-ct} (1+|x|) \dif t
			\leq C(1+|x|).
		\end{align*}
		
		(ii) It follows from \cite[Theorem 1.3]{Liang2020Gradient} that there exist some positive constants $c$ and $C$ such that
		\begin{align*}
			|P_t h(x)-P_t h(y)| &\leq \mathcal{W}_1( \mathcal{L}(X_t^x), \mathcal{L}(X_t^y) )
			\leq C e^{-c t}|x-y|,
		\end{align*}
		which implies that 
		\begin{align*}
			|\nabla P_t h(x)| &\leq Ce^{-ct}.
		\end{align*}
		Combining this with \eqref{solution f_h}, one knows that there exists some positive constant $C$ independent of $x$ and $t$ such that
		\begin{align*}
			|\nabla f_h(x)| &\leq \int_0^{\infty} |\nabla P_t h(x)| \dif t 
			\leq C.
		\end{align*}
		The proof of Lemma \ref{pro:stein regularies} is complete.
	\end{proof}

	

\end{document}